\documentclass[A4, 11pt]{article}
\usepackage{amscd, amssymb, amsmath, amsthm}
\usepackage{latexsym}
\usepackage{upref}
\usepackage{epsfig}
\usepackage{mathrsfs}
\usepackage{float, floatflt}
\usepackage{indentfirst}
\usepackage{hyperref}
\hypersetup{
    colorlinks=true,
    linkcolor=blue,
    citecolor=blue,
    filecolor=blue,
    urlcolor=blue,
}
\usepackage{graphics,graphicx,color}
\usepackage{cite}
\usepackage{enumitem}
\usepackage{fancyhdr} 
\usepackage[T1]{fontenc}
\usepackage{mathptmx} 

\AtBeginDocument{
  \DeclareSymbolFont{AMSb}{U}{msb}{m}{n}
  \DeclareSymbolFontAlphabet{\mathbb}{AMSb}}  


\usepackage{rotating} 
\usepackage{booktabs} 
\usepackage{pb-diagram,pb-xy}       
\usepackage{tikz}                   
\usetikzlibrary{matrix,arrows}
\usepackage{multirow} 
\usepackage{color, colortbl} 
\definecolor{Gray}{gray}{0.93} 
\definecolor{LightCyan}{rgb}{0.88,1,1}

\theoremstyle{plain}
\newtheorem{theorem}{Theorem}[section]
\newtheorem{lemma}[theorem]{Lemma}

\newtheorem{corollary}[theorem]{Corollary}
\theoremstyle{definition}

\theoremstyle{remark}

\numberwithin{equation}{section}


\makeatletter
\def\th@plain{%
  \thm@notefont{}
  \itshape 
}
\def\th@definition{%
  \thm@notefont{}
  \normalfont 
} \makeatother

\setlist{font=\normalfont}

\DeclareMathAlphabet{\cols}{OMS}{cmsy}{m}{n} %

\newcommand{\R}{\mathbb{R}}
\newcommand{\B}{\mathbb{B}}
\newcommand{\set}[1]{\{#1\}}
\newcommand{\cset}[2]{\set{{#1}\colon{#2}}}
\newcommand{\norm}[1]{\|#1\|}
\newcommand{\Bp}[1]{\left(#1\right)}
\newcommand{\gen}[1]{\langle#1\rangle}
\DeclareSymbolFont{newfont}{OML}{cmm}{m}{it} 
\DeclareMathSymbol{\vrho}{3}{newfont}{37}
\newcommand{\gyr}[2]{{\mathrm{gyr}[{#1}]}{#2}}
\newcommand{\aut}[1]{\mathrm{Aut}\,{(#1)}}
\newcommand{\igyr}[2]{{\mathrm{gyr^{-1}}[{#1}]}{#2}}

\newcommand{\Cl}[1]{\mathrm{C}\ell_{#1}}
\newcommand{\lsum}[2]{\displaystyle\sum_{#1}^{#2}}
\newcommand{\mul}[1]{{#1}^\times}
\newcommand{\abs}[1]{|#1|}

\newcommand{\Or}[1]{\mathrm{O}\,({#1})}
\newcommand{\res}[2]{{#1}\hskip-3pt\mid_{#2}}
\newcommand{\Gyr}[2]{{\mathrm{Gyr}[{#1}]}{#2}}
\newcommand{\Iso}[1]{\mathrm{Iso}\,{(#1)}}

\newcommand{\qt}[1]{``#1''}



\renewcommand{\vec}[1]{\mathbf{#1}}

\pagestyle{fancy} \fancyhf{} \fancyhead[c]{} \fancyhead[L]{\sf An inequality related to M\"{o}bius transformations}\fancyhead[R]{\sf \thepage} \fancyfoot[c]{}

\begin{document}
\title{\bf{An inequality related to M\"{o}bius transformations}}
\author{
Themistocles M. Rassias\\
Department of Mathematics\\
National Technical University of Athens\\
Zografou, Campus\\
15780 Athens, Greece\\
{\tt trassias@math.ntua.gr}\\[0.3cm]
Teerapong Suksumran\footnote{Corresponding author.}\,\,\,\href{https://orcid.org/0000-0002-1239-5586}{\includegraphics[scale=1]{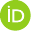}}\\
Department of Mathematics\\
Faculty of Science, Chiang Mai University\\
Chiang Mai 50200, Thailand\\
{\tt teerapong.suksumran@cmu.ac.th}}
\date{}
\maketitle


\begin{abstract}
The open unit ball $\mathbb{B} = \{\vec{v}\in\mathbb{R}^n\colon\|\vec{v}\|<1\}$ is endowed with M\"{o}bius \mbox{addition} $\oplus_M$ defined by
$$
\vec{u}\oplus_M\vec{v} = \dfrac{(1 + 2\langle\vec{u},\vec{v}\rangle +
\|\vec{v}\|^2)\vec{u} + (1 - \norm{\vec{u}}^2)\vec{v}}{1 +
2\gen{\vec{u},\vec{v}} + \|\vec{u}\|^2\|\vec{v}\|^2}
$$
for all $\vec{u},\vec{v}\in\B$. In this article, we prove the inequality
$$
\dfrac{\|\vec{u}\|-\|\vec{v}\|}{1+\|\vec{u}\|\|\vec{v}\|}\leq \|\vec{u}\oplus_M \vec{v}\| \leq \dfrac{\|\vec{u}\|+\|\vec{v}\|}{1-\|\vec{u}\|\|\vec{v}\|}
$$
in $\mathbb{B}$. This leads to a new metric on $\B$ defined by $$d_T(\vec{u},\vec{v}) = \tan^{-1}{\|-\vec{u}\oplus_M\vec{v}\|},$$ which turns out to be an invariant of M\"{o}bius transformations on $\R^n$ carrying $\B$ onto itself. We also compute the isometry group of $(\B, d_T)$ and give a parametrization of the isometry group by vectors and rotations.
\end{abstract}
\textbf{Keywords.} M\"{o}bius transformation, Poincar\'{e} metric, Euclidean norm inequality, isometry group, gyrogroup.\\[3pt]
\textbf{2010 MSC.} Primary 51B10; Secondary 46T99, 15A66, 51F15, 20N05.
\thispagestyle{empty}

\newpage

\section{The unit ball of $\boldsymbol{n}$-dimensional Euclidean space $\boldsymbol{\R^n}$}
Let $\B$ denote the open unit ball of $n$-dimensional Euclidean space $\R^n$, that is,
\begin{equation}
\B =\cset{\vec{v}\in\R^n}{\norm{\vec{v}}<1},
\end{equation}
where $\norm{\cdot}$ denotes the usual Euclidean norm on $\R^n$.  It is known in the literature that $\B$ forms a bounded symmetric domain, naturally associated with the
Poincar\'{e} and Beltrami--Klein models of $n$-dimensional hyperbolic geometry. In fact, the Poincar\'{e} metric $d_P$ corresponding to a curvature of $-1$ is given by
\begin{equation}
d_P(\vec{x}, \vec{y}) = \cosh^{-1}\Bp{1+\dfrac{2\norm{\vec{x}-\vec{y}}^2}{(1-\norm{\vec{x}}^2)(1-\norm{\vec{y}}^2)}}
\end{equation}
for all $\vec{x}, \vec{y}\in \B$ \cite[p. 1232]{SKJL2013UBL}. Further, the Cayley--Klein metric associated with the Beltrami--Klein model is defined via cross-ratios; see, for instance, \cite[p. 1233]{SKJL2013UBL}.

From an algebraic point of view, the unit ball has a group-like structure when it is endowed with {\it M\"{o}bius addition} $\oplus_M$ defined by
\begin{equation}\label{eqn: Euclidean Mobius addition}
\vec{u}\oplus_M\vec{v} = \dfrac{(1 + 2\gen{\vec{u},\vec{v}} +
\norm{\vec{v}}^2)\vec{u} + (1 - \norm{\vec{u}}^2)\vec{v}}{1 +
2\gen{\vec{u},\vec{v}} + \norm{\vec{u}}^2\norm{\vec{v}}^2}.
\end{equation}
M\"{o}bius addition governs the unit ball in the same way that ordinary vector addition governs the Euclidean space; see, for instance, \cite{AU2008FMG, YFTS2005PAH, JR2006FHM}. Further, M\"{o}bius addition induces the well-known M\"{o}bius transformation of $\B$ of the form
\begin{equation}
L_{\vec{u}}(\vec{v}) = \vec{u}\oplus_M\vec{v} = \dfrac{(1 + 2\gen{\vec{u},\vec{v}} +
\norm{\vec{v}}^2)\vec{u} + (1 - \norm{\vec{u}}^2)\vec{v}}{1 +
2\gen{\vec{u},\vec{v}} + \norm{\vec{u}}^2\norm{\vec{v}}^2},
\end{equation}
called the {\it hyperbolic translation} by $\vec{u}$, for all $\vec{u}\in\B$ \cite[p.~124]{JR2006FHM}. A remarkable result of Kim and Lawson shows strong connections between the geometric and algebraic structures of the unit ball. In fact, they relate the Poincar\'{e} metric with M\"{o}bius addition:
\begin{equation}\label{eqn: Poincare metric and Mobius addition}
d_P(\vec{x},\vec{y}) = 2\tanh^{-1}{\norm{-\vec{x}\oplus_M\vec{y}}}
\end{equation}
for all $\vec{x},\vec{y}\in\B$; see Theorem 3.7 of \cite{SKJL2013UBL}. Equation \eqref{eqn: Poincare metric and Mobius addition}  includes what Ungar refers to as a {\it gyrometric} \cite[Definition 6.8]{AU2008AHG}. More precisely, the (M\"{o}bius) gyrometric and the rapidity metric of $(\B, \oplus_M)$ are defined by
\begin{equation}\label{eqn: Mobius gyrometric}
\varrho_M(\vec{x}, \vec{y}) = \norm{-\vec{x}\oplus_M\vec{y}}
\end{equation}
and by \begin{equation}
d_M(\vec{x},\vec{y}) = \tanh^{-1}{(\varrho_M(\vec{x}, \vec{y}))}
\end{equation}
for all $\vec{x},\vec{y}\in\B$, respectively.

\subsection{A nonassociative structure of the unit ball}
The space $(\B, \oplus_M)$ shares many properties with abelian groups, called by some a {\it gyrocommutative gyrogroup} and by others a {\it Bruck loop} or a {\it K-loop}. Henceforth,  $(\B, \oplus_M)$ is referred to as the {\it M\"{o}bius gyrogroup}. 

The group-like axioms satisfied by the M\"{o}bius gyrogroup are as follows.
\begin{enumerate}[label=(\Roman*)]
    \item ({\sc identity}) The zero vector $\vec{0}$ satisfies $\vec{0}\oplus_M \vec{v} = \vec{v} = \vec{v}\oplus_M\vec{0}$ for all $\vec{v}\in \B$.
    \item ({\sc inverse}) For each $\vec{v}\in \B$, the negative vector $-\vec{v}$ belongs to $\B$ and satisfies $$(-\vec{v})\oplus_M \vec{v} = \vec{0} = \vec{v}\oplus_M(-\vec{v}).$$
    \item\label{item: gyroassociative law} ({\sc the gyroassociative law}) For all $\vec{u}, \vec{v}\in \B$, there are automorphisms $\gyr{\vec{u},\vec{v}}{}$ and $\gyr{\vec{v},\vec{u}}{}$ in $\aut{\B, \oplus_M}$ such that
\begin{equation*}
\vec{u}\oplus_M (\vec{v}\oplus_M \vec{w}) = (\vec{u}\oplus_M \vec{v})\oplus_M\gyr{\vec{u}, \vec{v}}{\vec{w}}
\end{equation*}
and 
\begin{equation*}
(\vec{u}\oplus_M \vec{v})\oplus_M \vec{w} = \vec{u}\oplus_M (\vec{v}\oplus_M\gyr{\vec{v}, \vec{u}}{\vec{w}})
\end{equation*}
for all $\vec{w}\in \B$.
    \item ({\sc the loop property}) For all $\vec{u},\vec{v}\in \B$, 
$$
\gyr{\vec{u}\oplus_M \vec{v}, \vec{v}}{} = \gyr{\vec{u}, \vec{v}}{}\quad\textrm{and}\quad \gyr{\vec{u}, \vec{v}\oplus_M \vec{u}}{} = \gyr{\vec{u}, \vec{v}}{}.
$$    
\item ({\sc the gyrocommutative law}) For all $\vec{u},\vec{v}\in \B$, 
$$
\vec{u}\oplus_M\vec{v} = \gyr{\vec{u}, \vec{v}}{(\vec{v}\oplus_M\vec{u})}.
$$
\end{enumerate}
The automorphism $\gyr{\vec{u}, \vec{v}}{}$ mentioned in Item \ref{item: gyroassociative law} is called the {\it gyroautomorphism} generated by $\vec{u}$ and $\vec{v}$. It is uniquely determined by its generators via the {\it gyrator identity} described by the formula
\begin{equation}
\gyr{\vec{u},\vec{v}}{\vec{w}} = -(\vec{u}\oplus_M \vec{v})\oplus_M (\vec{u}\oplus_M (\vec{v}\oplus_M \vec{w}))
\end{equation}
for all $\vec{w}\in\B$. Sometimes it is convenient to denote $-\vec{v}$ by $\ominus\vec{v}$, the (unique) \mbox{inverse} of $\vec{v}$ with respect to M\"{o}bius addition. Some elementary properties of the M\"{o}bius gyrogroup are collected in Table \ref{tab: properties of gyrogroups}.

\begin{table}[ht]
\centering
{\small
\begin{tabular}{ll}\hline
\rowcolor{LightCyan}{\hskip2cm}{\sc gyrogroup identity} & {\hskip0.5cm}{\sc name/reference}\\ \hline
$L_{\ominus \vec{u}} = L_{\vec{u}}^{-1}$ & Inverse of gyrotranslation\\ 
\rowcolor{Gray}$\ominus \vec{u}\oplus_M(\vec{u}\oplus_M \vec{v}) = \vec{v}$ & Left cancellation law\\
$\ominus (\vec{u}\oplus_M \vec{v}) = \gyr{\vec{u}, \vec{v}}{(\ominus \vec{v}\oplus_M \ominus\vec{u})}$ & cf. $(gh)^{-1} = h^{-1}g^{-1}$\\
\rowcolor{Gray} $(\ominus \vec{u}\oplus_M \vec{v})\oplus_M\gyr{\ominus \vec{u}, \vec{v}}{(\ominus \vec{v}\oplus_M \vec{w})} = \ominus \vec{u}\oplus_M \vec{w}$ & cf. $(g^{-1}h)(h^{-1}k) = g^{-1}k$\\
$\gyr{\ominus \vec{u}, \ominus \vec{v}}{} = \gyr{\vec{u}, \vec{v}}{}$ & Even property\\
\rowcolor{Gray}$\gyr{\vec{v}, \vec{u}} \,=\, \igyr{\vec{u}, \vec{v}}{}$, the inverse of $\gyr{\vec{u}, \vec{v}}{}$ & Inversive symmetry\\
\hline
\end{tabular}}\vskip5pt
\caption{Properties of the M\"{o}bius gyrogroup (cf. \cite{AU2008AHG, TS2016TAG}).}\label{tab: properties of gyrogroups}
\end{table}

\subsection{Isometries of the unit ball}
It is known in the literature that the transformation $L_\vec{u}\colon \vec{v}\mapsto \vec{u}\oplus_M \vec{v}$ preserves the gyrometric $\varrho_M$; see, for instance, \cite[Lemma 3.2 (v)]{SKJL2013UBL}. Thus, $L_\vec{u}$ preserves the rapidity metric $d_M$. In fact, every isometry of $(\B, d_M)$ must be of the form $L_\vec{u}\circ \tau$, where $\tau$ is the restriction of an orthogonal transformation on $\R^n$ to the unit ball, due to the fact that any M\"{o}bius transformation that fixes $\vec{0}$ is orthogonal. The following theorem shows that the metric geometry of $\B$ with respect to $d_M$ is  homogeneous.

\begin{theorem}[Homogeneity]
For each pair of points $\vec{x}$ and $\vec{y}$ in $\B$, there is an isometry $T$ of $(\B, d_M)$ such that $T(\vec{x}) = \vec{y}$. In particular, $\B$ is homogeneous.
\end{theorem}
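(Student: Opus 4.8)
The plan is to exhibit $T$ explicitly as a composition of two hyperbolic translations. Recall from the discussion preceding the theorem that for every $\vec{u}\in\B$ the map $L_{\vec{u}}\colon\vec{v}\mapsto\vec{u}\oplus_M\vec{v}$ preserves the gyrometric $\varrho_M$, i.e. $\varrho_M(L_{\vec{u}}(\vec{a}),L_{\vec{u}}(\vec{b}))=\varrho_M(\vec{a},\vec{b})$ for all $\vec{a},\vec{b}\in\B$ (cf. \cite[Lemma 3.2 (v)]{SKJL2013UBL}). Since $d_M=\tanh^{-1}\circ\,\varrho_M$ and $\tanh^{-1}$ is a strictly increasing bijection of $[0,1)$ onto $[0,\infty)$, it follows at once that $L_{\vec{u}}$ preserves $d_M$ as well; moreover $L_{\vec{u}}$ is a bijection of $\B$ with inverse $L_{\ominus\vec{u}}$ (first row of Table~\ref{tab: properties of gyrogroups}), so $L_{\vec{u}}$ is an isometry of $(\B,d_M)$. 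As the composition of two isometries is again an isometry, any map of the form $L_{\vec{u}}\circ L_{\vec{w}}$ is an isometry of $(\B,d_M)$.

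Now fix $\vec{x},\vec{y}\in\B$ and set $T=L_{\vec{y}}\circ L_{\ominus\vec{x}}$. By the inverse axiom, $L_{\ominus\vec{x}}(\vec{x})=\ominus\vec{x}\oplus_M\vec{x}=\vec{0}$, and by the identity axiom $L_{\vec{y}}(\vec{0})=\vec{y}\oplus_M\vec{0}=\vec{y}$; hence
\[
T(\vec{x})=L_{\vec{y}}\bigl(L_{\ominus\vec{x}}(\vec{x})\bigr)=L_{\vec{y}}(\vec{0})=\vec{y}.
\]
Thus $T$ is an isometry of $(\B,d_M)$ carrying $\vec{x}$ to $\vec{y}$, which proves the first assertion. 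Since $\vec{x}$ and $\vec{y}$ were arbitrary, the isometry group of $(\B,d_M)$ acts transitively on $\B$; that is, $\B$ is homogeneous.

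I expect no serious obstacle here: the left gyrotranslations are $d_M$-isometries and already act transitively on $\B$ (they move every point to the origin and back), so the left cancellation laws and the identity/inverse axioms do all the work. The only point requiring a little care is the passage from invariance of $\varrho_M$ to invariance of $d_M$, which is immediate from the monotonicity of $\tanh^{-1}$. One could alternatively try to produce a single $\vec{u}$ with $L_{\vec{u}}(\vec{x})=\vec{y}$, but solving $\vec{u}\oplus_M\vec{x}=\vec{y}$ directly is slightly more delicate because of the non-associativity of $\oplus_M$, so routing through the origin via the two-step composition $L_{\vec{y}}\circ L_{\ominus\vec{x}}$ is the cleaner route.
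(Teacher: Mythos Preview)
Your proof is correct and follows exactly the same approach as the paper: define $T=L_{\vec{y}}\circ L_{\ominus\vec{x}}$, note that it is a composite of isometries, and compute $T(\vec{x})=\vec{y}$. Your write-up is simply more detailed in justifying why each $L_{\vec{u}}$ is a $d_M$-isometry.
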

\begin{proof}
Let $\vec{x}, \vec{y}\in\B$. Define $T = L_{\vec{y}}\circ L_{\ominus \vec{x}}$. Then $T$ is an isometry of $\B$, being the composite of isometries of $\B$. Further,
$T(\vec{x}) = \vec{y}\oplus_M(\ominus \vec{x}\oplus_M \vec{x}) = \vec{y}$.
\end{proof}

By using the gyrogroup formalism, a {\it point-reflection} symmetry of $\B$ is easy to construct, as shown in the following theorem.

\begin{theorem}[Symmetry]
For each point $\vec{x}\in\B$, there is a symmetry $S_{\vec{x}}$ of $\B$; that is, $S_{\vec{x}}$ is an isometry of $(\B, d_M)$ such that $S_{\vec{x}}^2$ is the identity transformation $I$ of $\B$ and $\vec{x}$ is the unique fixed point of $S_{\vec{x}}$.
\end{theorem}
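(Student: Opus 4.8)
The plan is to reduce everything to the case $\vec{x}=\vec{0}$, where a point reflection is obvious, and then transport it to an arbitrary $\vec{x}$ by conjugating with the hyperbolic translation $L_{\vec{x}}$. First I would record that the antipodal map $J\colon\vec{v}\mapsto\ominus\vec{v}=-\vec{v}$ is an isometry of $(\B,d_M)$: it is the restriction to $\B$ of the orthogonal map $-I$ on $\R^n$, and every such restriction preserves $d_M$, as observed in the discussion preceding the Homogeneity theorem. Concretely, since the right-hand side of \eqref{eqn: Euclidean Mobius addition} involves $\vec{u},\vec{v}$ only through Euclidean inner products and norms, one has $A\vec{u}\oplus_M A\vec{v}=A(\vec{u}\oplus_M\vec{v})$ for every $A\in\Or{n}$, so $\varrho_M$ and hence $d_M$ are $\Or{n}$-invariant. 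Trivially $J^2=I$ and $\vec{0}$ is the unique fixed point of $J$.

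Next, for an arbitrary $\vec{x}\in\B$ I would set $S_{\vec{x}}=L_{\vec{x}}\circ J\circ L_{\ominus\vec{x}}$, that is, $S_{\vec{x}}(\vec{v})=\vec{x}\oplus_M\bigl(\ominus(\ominus\vec{x}\oplus_M\vec{v})\bigr)$. Being a composite of isometries of $(\B,d_M)$ (the maps $L_{\vec{x}}$ and $L_{\ominus\vec{x}}$ preserve $\varrho_M$, hence $d_M$, and $J$ does too by the previous paragraph), $S_{\vec{x}}$ is an isometry of $(\B,d_M)$. Using the identity $L_{\ominus\vec{x}}=L_{\vec{x}}^{-1}$ from Table~\ref{tab: properties of gyrogroups}, a one-line computation gives $S_{\vec{x}}^2=L_{\vec{x}}\circ J\circ(L_{\ominus\vec{x}}\circ L_{\vec{x}})\circ J\circ L_{\ominus\vec{x}}=L_{\vec{x}}\circ J^2\circ L_{\ominus\vec{x}}=L_{\vec{x}}\circ L_{\ominus\vec{x}}=I$.

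For the fixed-point statement, I would argue that $S_{\vec{x}}(\vec{v})=\vec{v}$ is equivalent, after applying the bijection $L_{\ominus\vec{x}}=L_{\vec{x}}^{-1}$ to both sides, to $J\bigl(L_{\ominus\vec{x}}(\vec{v})\bigr)=L_{\ominus\vec{x}}(\vec{v})$, i.e.\ to $\ominus\vec{x}\oplus_M\vec{v}$ being a fixed point of $J$; since $\vec{0}$ is the only fixed point of $J$ and $\ominus\vec{x}\oplus_M\vec{v}=\vec{0}$ forces $\vec{v}=\vec{x}$ (apply $L_{\vec{x}}$, or invoke the inverse axiom and injectivity of $L_{\ominus\vec{x}}$), the point $\vec{x}$ is the unique fixed point of $S_{\vec{x}}$. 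I do not anticipate a real obstacle; the only step that needs a word of justification is the $\Or{n}$-equivariance of $\oplus_M$ that makes $J$ an isometry, and that is immediate from the explicit formula \eqref{eqn: Euclidean Mobius addition}.
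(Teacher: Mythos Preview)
Your proposal is correct and essentially identical to the paper's proof: the paper defines the inversion $\iota(\vec{v})=-\vec{v}$ (your $J$), observes it is a norm-preserving linear map and hence an isometry of $(\B,d_M)$ with unique fixed point $\vec{0}$, sets $S_{\vec{x}}=L_{\vec{x}}\circ\iota\circ L_{\ominus\vec{x}}$, and verifies $S_{\vec{x}}^2=I$ and the fixed-point statement exactly as you do. The only cosmetic difference is that the paper inserts a line noting $S_{\vec{x}}\ne I$, which is already implicit in your proof that $\vec{x}$ is the \emph{unique} fixed point.
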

\begin{proof}
Let $\iota$ be the inversion map of $\B$, that is, $\iota(\vec{v}) = \ominus \vec{v}$ for all $\vec{v}\in\B$. Since $\ominus \vec{v} = -\vec{v}$ for all $\vec{v}\in\B$, $\iota$ is simply the negative map: $\vec{v}\mapsto -\vec{v}$. Note that $\iota$ is an isometry of $(\B, d_M)$ for $\iota$ is linear and preserves the Euclidean norm. Furthermore, $\iota(\vec{v}) = \vec{v}$ if and only if $\vec{v} = \vec{0}$. 

Given $\vec{x}\in\B$, define $S_{\vec{x}} = L_\vec{x}\circ \iota\circ L_{\ominus \vec{x}}$. Then $S_{\vec{x}} = L_\vec{x}\circ \iota\circ L_{\vec{x}}^{-1}$ and so 
$$
S_\vec{x}^2 = (L_\vec{x}\circ \iota\circ L_{\vec{x}}^{-1})\circ(L_\vec{x}\circ \iota\circ L_{\vec{x}}^{-1}) = L_{\vec{x}}\circ\iota^2\circ L_{\vec{x}}^{-1} = L_{\vec{x}}\circ L_{\vec{x}}^{-1} = I.
$$
Note that $S_\vec{x}\ne I$; otherwise, we would have $L_\vec{x}\circ \iota\circ L_{\vec{x}}^{-1} = I$ and would have $\iota = I$, a contradiction. It is clear that $S_\vec{x}$ is an isometry of $\B$. By construction, $\vec{x}$ is a fixed point of $S_{\vec{x}}$. Suppose that $\vec{y}$ is a fixed point of $S_{\vec{x}}$, that is, $S_{\vec{x}}(\vec{y}) = \vec{y}$. It follows that
$
\vec{x}\oplus_M \iota(\ominus \vec{x}\oplus_M \vec{y}) = \vec{y}
$
and hence $\iota(\ominus \vec{x}\oplus_M \vec{y}) = \ominus \vec{x}\oplus_M \vec{y}$. As mentioned previously, $\vec{0}$ is the unique fixed point of $\iota$ and so $\ominus \vec{x}\oplus_M \vec{y} = \vec{0}$. This implies that $\vec{x} = \vec{y}$.
\end{proof}

We close this section with the following theorem whose proof is straight-forward (and so is omitted).

\begin{theorem}
If $\tau\in\aut{\B, \oplus_M}$ and $\norm{\tau(\vec{v})} = \norm{\vec{v}}$ for all $\vec{v}\in\B$, then $\tau$ is an isometry of $\B$ with respect to $d_M$. In particular, the gyroautomorphisms of $(\B, \oplus_M)$ are isometries.
\end{theorem}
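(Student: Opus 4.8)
The plan is to reduce the statement to a property of the M\"obius gyrometric $\varrho_M$ and then transport that property through $d_M = \tanh^{-1}\circ\,\varrho_M$.

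First I would record two elementary facts about any $\tau\in\aut{\B,\oplus_M}$: since $\tau$ is a bijective homomorphism it fixes the identity, $\tau(\vec{0})=\vec{0}$, and it respects M\"obius inversion, $\tau(\ominus\vec{v})=\ominus\tau(\vec{v})$ for every $\vec{v}\in\B$ — the latter obtained by applying $\tau$ to $(\ominus\vec{v})\oplus_M\vec{v}=\vec{0}$, using additivity of $\tau$, and invoking uniqueness of inverses. With these in hand, for arbitrary $\vec{x},\vec{y}\in\B$ I would compute
$$
\varrho_M(\tau(\vec{x}),\tau(\vec{y})) = \norm{\ominus\tau(\vec{x})\oplus_M\tau(\vec{y})} = \norm{\tau(\ominus\vec{x})\oplus_M\tau(\vec{y})} = \norm{\tau(\ominus\vec{x}\oplus_M\vec{y})} = \norm{\ominus\vec{x}\oplus_M\vec{y}} = \varrho_M(\vec{x},\vec{y}),
$$
where the second equality uses $\tau(\ominus\vec{x})=\ominus\tau(\vec{x})$, the third uses additivity of $\tau$, and the fourth is the norm-preserving hypothesis. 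Since $\tanh^{-1}$ is strictly increasing (hence injective) on $[0,1)$ and $\varrho_M$ takes values there, applying it to both sides yields $d_M(\tau(\vec{x}),\tau(\vec{y}))=d_M(\vec{x},\vec{y})$; together with the bijectivity of $\tau$ this says $\tau$ is an isometry of $(\B,d_M)$.

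For the final assertion I would argue that each gyroautomorphism $\gyr{\vec{u},\vec{v}}{}$ satisfies the hypotheses of the theorem. It lies in $\aut{\B,\oplus_M}$ by the gyroassociative law. It preserves the Euclidean norm because, by the gyrator identity, $\gyr{\vec{u},\vec{v}}{} = L_{\ominus(\vec{u}\oplus_M\vec{v})}\circ L_{\vec{u}}\circ L_{\vec{v}}$ is a composition of hyperbolic translations, hence a M\"obius transformation of $\B$, and it fixes $\vec{0}$ (being an automorphism), so it is the restriction of an orthogonal transformation of $\R^n$, as recalled above; orthogonal maps preserve $\norm{\cdot}$. The first part of the theorem then applies.

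I do not anticipate a genuine obstacle here — the argument is bookkeeping with the gyrogroup axioms. The only points needing a word of care are that $\tau$ commutes with M\"obius inversion (standard, and implicit in Table \ref{tab: properties of gyrogroups}) and that gyroautomorphisms are norm-preserving, and both are dispatched by facts already available in the excerpt.
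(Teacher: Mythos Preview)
Your argument is correct and is exactly the kind of direct verification the paper has in mind: the paper omits the proof entirely, declaring it ``straight-forward,'' and your computation of $\varrho_M(\tau(\vec{x}),\tau(\vec{y}))=\varrho_M(\vec{x},\vec{y})$ followed by composition with $\tanh^{-1}$ is precisely that routine check. Your justification that gyroautomorphisms preserve the Euclidean norm via the fact that M\"obius self-maps of $\B$ fixing $\vec{0}$ are orthogonal is also consistent with what has been established at this point in the paper.
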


\section{The negative Euclidean space and its Clifford algebra}
It seems that the formalism of Clifford algebras is a suitable tool for the study of the M\"{o}bius gyrogroup \cite{MFGR2011MGC, JL2010CAM}. Let us begin with the definition of an underlying vector space that will be used to built a unital associative algebra in which M\"{o}bius addition has a compact formula. The {\it negative} Euclidean space has $\R^n$ as the underlying vector space, but its inner product is a variant of the  Euclidean inner product defined by
\begin{equation}\label{eqn: inner product of negative space}
B(\vec{u},\vec{v}) = -\gen{\vec{u},\vec{v}},\qquad \vec{u},\vec{v}\in\R^n.
\end{equation}
Note that \eqref{eqn: inner product of negative space} defines a nondegenerate symmetric bilinear form on $\R^n$. Also, the associated quadratic form is given by $Q(\vec{v}) = -\norm{\vec{v}}^2$ for all $\vec{v}\in\R^n$.

The negative Euclidean space induces a real unital associative algebra, which is unique up to isomorphism, called the {\it Clifford algebra} of $(\R^n, B)$ denoted by $\Cl{n}$ \cite{JL2010CAM}. To describe the structure of $\Cl{n}$, let $\set{\vec{e}_1, \vec{e}_2,\ldots, \vec{e}_n}$ be the standard basis of $\R^n$. Then $\Cl{n}$ has a basis of the form
\begin{equation}
\cset{e_I}{I = \emptyset\textrm{ or }I = \set{1\leq i_1 < i_2 < \cdots < i_k\leq n}},
\end{equation}
where $e_I = \vec{e}_{i_1}\vec{e}_{i_2}\cdots \vec{e}_{i_k}$ for $I = \set{1\leq i_1 < i_2 < \cdots < i_k\leq n}$ and $e_{\emptyset} = 1$, the multiplicative identity of $\Cl{n}$. Hence, a typical element of $\Cl{n}$ is of the form $\lsum{I}{}\lambda_Ie_I$ with $\lambda_I$ in $\R$. The binary operations of vector addition and scalar multiplication in $\Cl{n}$ are defined pointwise. The product of two elements in $\Cl{n}$ is obtained by using the distributive law (but not assuming that algebra multiplication is commutative) subject to the defining relations
\begin{equation}
\vec{e}_i^2 = -1\quad\textrm{and}\quad \vec{e}_i\vec{e}_j = -\vec{e}_j\vec{e}_i
\end{equation}
for all $i, j\in\set{1,2,\ldots, n}$ with $i\ne j$. The base field $\R$ is embedded into $\Cl{n}$ by the map $\lambda\mapsto \lambda 1$, and the original space $\R^n$ is embedded into $\Cl{n}$ by the inclusion map \cite[Section 3]{TS2016TAG}.

There is a unique involutive algebra anti-automorphism of $\Cl{n}$ that extends the identity automorphism $I$ of $\R^n$, called the {\it reversion}, denoted by $a\mapsto \tilde{a}$. Further, the {\it grade involution} denoted by $a\mapsto \hat{a}$ is a unique involutive automorphism of $\Cl{n}$ that extends $-I$, whereas the (Clifford) {\it conjugation} denoted by $a\mapsto \bar{a}$ is a unique involutive anti-automorphism of $\Cl{n}$ that extends $-I$. The grade involution is used to define a {\it Clifford group} (also called a {\it Lipschitz group}), which is a group under multiplication of $\Cl{n}$ defined by
\begin{equation}
\Gamma_n = \cset{g\in\Cl{n}}{g\textrm{ is invertible and }\hat{g}\vec{v}g^{-1}\in\R^n\textrm{ for all }\vec{v}\in\R^n}.
\end{equation}
The conjugation of $\Cl{n}$ gives rise to a group homomorphism of $\Gamma_n$. In fact, define a map $\eta$ by
\begin{equation}
\eta(a) = a\bar{a},\qquad a\in\Cl{n}.
\end{equation}
Then the restriction of $\eta$ to $\Gamma_n$ is a homomorphism from $\Gamma_n$ to the multiplicative group of nonzero numbers, denoted by $\mul{\R}$ \cite[Proposition 2]{TSKW2015EGB}. If an element $a$ in $\Cl{n}$ has the property that $\eta(a)\in\R$ and $\eta(a)\geq 0$, we define $\abs{a} = \sqrt{\eta(a)}$. It is not difficult to see that $\abs{\vec{v}} = \norm{\vec{v}}$ for all $\vec{v}\in\R^n$. 

The following theorem summarizes basic properties of $\Cl{n}$ that will be used in Section \ref{Sec: Main result}, especially the proof of Theorem \ref{thm: bound for Euclidean norm of Mobius addition}.

\begin{theorem}[Proposition 5, \cite{TSKW2015EGB}]\label{thm: Proposition 5 of paper EGB}
The following properties hold in the Clifford algebra $\Cl{n}$.
\begin{enumerate}
\item $\vec{u}\vec{v} + \vec{v}\vec{u} = -2\gen{\vec{u},\vec{v}}$ for all $\vec{u},
\vec{v}\in\R^n$.
\item\label{item: V^2 = -||v||^2} $\vec{v}^2 = -\norm{\vec{v}}^2$ for all $\vec{v}\in\R^n$.
\item $1-\vec{u}\vec{v}\in\Gamma_n$ and $(1-\vec{u}\vec{v})^{-1} = \dfrac{1-\vec{v}\vec{u}}{\eta(1-\vec{u}\vec{v})}$
for all $\vec{u}, \vec{v}\in\R^n$ with
$\norm{\vec{u}}\norm{\vec{v}}\ne 1$.
\item\label{item: squared modulus} $\eta(\vec{w}(1-\vec{u}\vec{v})^{-1}) =     \dfrac{\eta(\vec{w})}{\eta(1-\vec{u}\vec{v})}$
for all $\vec{u}, \vec{v}, \vec{w}\in\R^n$ with
$\norm{\vec{u}}\norm{\vec{v}}\ne 1$.
\end{enumerate}
\end{theorem}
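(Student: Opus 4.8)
The plan is to read off items (1) and (2) directly from the defining relations of $\Cl{n}$, and then to bootstrap items (3) and (4) from them together with the stated properties of the conjugation and of the homomorphism $\eta\colon\Gamma_n\to\mul{\R}$.

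For (1), I would write $\vec{u} = \sum_i u_i\vec{e}_i$ and $\vec{v} = \sum_j v_j\vec{e}_j$ and expand $\vec{u}\vec{v} + \vec{v}\vec{u}$ by bilinearity of the product: the terms with $i\ne j$ cancel in pairs because $\vec{e}_i\vec{e}_j = -\vec{e}_j\vec{e}_i$, and each diagonal contribution equals $2u_iv_i\vec{e}_i^2 = -2u_iv_i$, so the sum collapses to $-2\gen{\vec{u},\vec{v}}$. Item (2) is then the case $\vec{u} = \vec{v}$. I would also record, for use in (3), the ``sandwich'' identity $\vec{a}\vec{w}\vec{a} = \norm{\vec{a}}^2\vec{w} - 2\gen{\vec{a},\vec{w}}\vec{a}$, obtained by writing $\vec{a}\vec{w} = -\vec{w}\vec{a} - 2\gen{\vec{a},\vec{w}}$ (by (1)) and multiplying on the right by $\vec{a}$, using $\vec{a}^2 = -\norm{\vec{a}}^2$; in particular $\vec{a}\vec{w}\vec{a}\in\R^n$.

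For (3), I would first compute $\eta(1-\vec{u}\vec{v})$. Since the conjugation is an anti-automorphism extending $-I$, one has $\overline{1-\vec{u}\vec{v}} = 1 - \bar{\vec{v}}\bar{\vec{u}} = 1 - \vec{v}\vec{u}$, and expanding $(1-\vec{u}\vec{v})(1-\vec{v}\vec{u})$ with (1) and (2) yields the scalar
\[
\eta(1-\vec{u}\vec{v}) = 1 + 2\gen{\vec{u},\vec{v}} + \norm{\vec{u}}^2\norm{\vec{v}}^2 \geq (1-\norm{\vec{u}}\norm{\vec{v}})^2
\]
by Cauchy--Schwarz, so it is strictly positive whenever $\norm{\vec{u}}\norm{\vec{v}}\ne 1$; interchanging $\vec{u}$ and $\vec{v}$ shows $(1-\vec{v}\vec{u})(1-\vec{u}\vec{v})$ equals the same scalar, which identifies $(1-\vec{v}\vec{u})/\eta(1-\vec{u}\vec{v})$ as a two-sided inverse of $1-\vec{u}\vec{v}$. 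For membership in $\Gamma_n$ I would argue that every nonzero $\vec{a}\in\R^n$ lies in $\Gamma_n$ — it is invertible with $\vec{a}^{-1} = -\vec{a}/\norm{\vec{a}}^2$, and $\hat{\vec{a}}\vec{w}\vec{a}^{-1} = \vec{a}\vec{w}\vec{a}/\norm{\vec{a}}^2\in\R^n$ by the sandwich identity — and then, when $\vec{u}\ne\vec{0}$, factor $1-\vec{u}\vec{v} = \vec{u}\,(\vec{u}^{-1}-\vec{v})$; since $\norm{\vec{u}^{-1}-\vec{v}}^2 = \eta(1-\vec{u}\vec{v})/\norm{\vec{u}}^2 > 0$, both factors are nonzero vectors, hence members of the group $\Gamma_n$, and so is their product. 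The case $\vec{u} = \vec{0}$ is trivial.

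For (4), I would invoke that $\eta$ is multiplicative on $\Gamma_n$ with values in $\mul{\R}$. If $\vec{w} = \vec{0}$ both sides are $0$; otherwise $\vec{w}\in\Gamma_n$ and, by (3), $1-\vec{u}\vec{v}\in\Gamma_n$, so $\vec{w}(1-\vec{u}\vec{v})^{-1}\in\Gamma_n$ and
\[
\eta\bigl(\vec{w}(1-\vec{u}\vec{v})^{-1}\bigr) = \eta(\vec{w})\,\eta\bigl((1-\vec{u}\vec{v})^{-1}\bigr) = \frac{\eta(\vec{w})}{\eta(1-\vec{u}\vec{v})},
\]
using $\eta(g)\eta(g^{-1}) = \eta(1) = 1$ for $g\in\Gamma_n$. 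The one genuinely delicate point, I expect, is the membership $1-\vec{u}\vec{v}\in\Gamma_n$ in (3): checking the conjugation condition $\widehat{(1-\vec{u}\vec{v})}\vec{w}(1-\vec{u}\vec{v})^{-1}\in\R^n$ head-on forces an awkward expansion, whereas the factorization into invertible vectors — available precisely when $\norm{\vec{u}}\norm{\vec{v}}\ne 1$ — sidesteps it; everything else is routine bilinear bookkeeping with the relations $\vec{e}_i^2 = -1$ and $\vec{e}_i\vec{e}_j = -\vec{e}_j\vec{e}_i$.
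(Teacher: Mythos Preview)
The paper does not prove this theorem; it is quoted as Proposition~5 of \cite{TSKW2015EGB} and used as a black box. Your argument is correct and complete: items (1)--(2) drop out of the defining relations, your computation $\eta(1-\vec{u}\vec{v}) = 1 + 2\gen{\vec{u},\vec{v}} + \norm{\vec{u}}^2\norm{\vec{v}}^2$ agrees with what the paper later invokes in the proof of Theorem~\ref{thm: bound for Euclidean norm of Mobius addition}, and the factorization $1-\vec{u}\vec{v} = \vec{u}(\vec{u}^{-1}-\vec{v})$ into two nonzero vectors --- available precisely when $\norm{\vec{u}}\norm{\vec{v}}\ne 1$ --- is the standard device for placing $1-\vec{u}\vec{v}$ in $\Gamma_n$ without a head-on verification of the twisted-conjugation condition. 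Item (4) then follows from the multiplicativity of $\eta$ on $\Gamma_n$ exactly as you say.
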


In view of Theorem \ref{thm: Proposition 5 of paper EGB} \eqref{item: V^2 = -||v||^2}, if $\vec{v}\ne\vec{0}$, then $\vec{v}$ is invertible with respect to multiplication of $\Cl{n}$ and $\vec{v}^{-1} = -\dfrac{1}{\norm{\vec{v}}^2}\vec{v}$. Furthermore, by Lemma 1 of \cite{TSKW2015EGB}, 
$$
\hat{\vec{v}}\vec{w}\vec{v}^{-1} = \dfrac{1}{\norm{\vec{v}}^2}\vec{v}\vec{w}\vec{v}
$$
belongs to $\R^n$ for all nonzero vectors $\vec{v}\in\R^n$ and all $\vec{w}\in\R^n$. This implies that $\R^n\setminus\set{\vec{0}}\subseteq\Gamma_n$ and we obtain the following theorem.

\begin{theorem}\label{thm: invariant of modulus}
Every transformation of the form $\vec{w}\mapsto q\vec{w}q^{-1}$, where $\vec{w}\in\R^n$ and $q\in\Gamma_n$, defines an orthogonal transformation on $\R^n$.
\end{theorem}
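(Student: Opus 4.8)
The plan is to exhibit the map $\phi_q\colon\R^n\to\R^n$, $\phi_q(\vec{w})=q\vec{w}q^{-1}$, as a norm-preserving linear endomorphism of the finite-dimensional inner product space $\R^n$; any such map is automatically injective (its kernel is trivial), hence bijective, and preserves the inner product by polarization, so it is orthogonal. Thus three things have to be checked: that $q\vec{w}q^{-1}$ really lies in $\R^n$, that $\phi_q$ is linear, and that $\phi_q$ preserves the Euclidean norm.

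Linearity I would get for free from the distributive law in $\Cl{n}$ and the centrality of real scalars, so that $q(\alpha\vec{w}_1+\beta\vec{w}_2)q^{-1}=\alpha\,q\vec{w}_1q^{-1}+\beta\,q\vec{w}_2q^{-1}$. For the norm, the case $\vec{w}=\vec{0}$ is trivial, and for $\vec{w}\neq\vec{0}$ we have $\vec{w}\in\Gamma_n$ because $\R^n\setminus\set{\vec{0}}\subseteq\Gamma_n$, hence $q\vec{w}q^{-1}\in\Gamma_n$ too. Since the restriction of $\eta$ to $\Gamma_n$ is a homomorphism into $\mul{\R}$ and $\eta(1)=1$, one has $\eta(q)\eta(q^{-1})=\eta(qq^{-1})=1$, and therefore
$$
\eta\bigl(q\vec{w}q^{-1}\bigr)=\eta(q)\,\eta(\vec{w})\,\eta(q^{-1})=\eta(\vec{w})=\norm{\vec{w}}^2,
$$
using $\eta(\vec{v})=\vec{v}\bar{\vec{v}}=-\vec{v}^2=\norm{\vec{v}}^2$ (from part (2) of Theorem~\ref{thm: Proposition 5 of paper EGB} and $\bar{\vec{v}}=-\vec{v}$). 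Granting that $q\vec{w}q^{-1}\in\R^n$, the same identity gives $\eta(q\vec{w}q^{-1})=\norm{q\vec{w}q^{-1}}^2$, and comparing yields $\norm{\phi_q(\vec{w})}=\norm{\vec{w}}$.

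The remaining and genuinely substantive point is that $q\vec{w}q^{-1}$ actually belongs to $\R^n$, since the definition of $\Gamma_n$ only guarantees this for the \emph{twisted} conjugation $\vec{w}\mapsto\hat{q}\vec{w}q^{-1}$. I would deduce it from the standard structural fact that every element of $\Gamma_n$ is homogeneous (purely even or purely odd), which forces $\hat{q}=\pm q$; then $q\vec{w}q^{-1}=\pm\hat{q}\vec{w}q^{-1}\in\R^n$ by the defining property of $\Gamma_n$. A more hands-on alternative, which in fact reproves the whole theorem, is to use that $\Gamma_n$ is generated by $\R^n\setminus\set{\vec{0}}$ together with $\mul{\R}$, so that $\phi_q=\phi_{\vec{v}_1}\circ\cdots\circ\phi_{\vec{v}_k}$ is a composite of vector conjugations (the scalar factor being central cancels); expanding with parts (1) and (2) of Theorem~\ref{thm: Proposition 5 of paper EGB} gives $\phi_{\vec{v}}(\vec{w})=\vec{v}\vec{w}\vec{v}^{-1}=\tfrac{2\gen{\vec{v},\vec{w}}}{\norm{\vec{v}}^2}\vec{v}-\vec{w}$, which is the negative of the Householder reflection across $\vec{v}^{\perp}$, hence orthogonal, and lands in $\R^n$ as already recorded just before the theorem. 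Either way, the one ingredient that is not purely formal, given the material in the excerpt, is this structural description of $\Gamma_n$, and I expect it to be the main obstacle.
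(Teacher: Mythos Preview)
Your proposal is correct and follows essentially the same route as the paper: handle $\vec{w}=\vec{0}$ separately, use that $\eta$ restricted to $\Gamma_n$ is a homomorphism into $\mul{\R}$ to get $\eta(q\vec{w}q^{-1})=\eta(\vec{w})$ and hence $\norm{q\vec{w}q^{-1}}=\norm{\vec{w}}$, and note linearity (the paper records bijectivity via the explicit inverse $\vec{w}\mapsto q^{-1}\vec{w}q$, which is equivalent to your finite-dimensional injectivity argument). You are in fact more careful than the paper on one point: the paper writes $\norm{q\vec{w}q^{-1}}$ without justifying that $q\vec{w}q^{-1}\in\R^n$, whereas you correctly flag this and supply the standard fix (homogeneity of elements of $\Gamma_n$, or generation of $\Gamma_n$ by nonzero vectors and scalars).
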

\begin{proof}
Let $\vec{w}\in\R^n$ and let $q\in\Gamma_n$. Clearly, $\norm{q\vec{0}q^{-1}} = 0 = \norm{\vec{0}}$. Therefore, we may assume that $\vec{w}\ne \vec{0}$ and hence $\vec{w}\in\Gamma_n$. Since $\eta$ is a homomorphism from $\Gamma_n$ to $\mul{\R}$, it follows that $\eta(q\vec{w}q^{-1}) = \eta(q)\eta(\vec{w})\eta(q)^{-1} = \eta(\vec{w})$  and so 
\begin{equation*}
\norm{q\vec{w}q^{-1}} = \sqrt{\eta(q\vec{w}q^{-1})} = \sqrt{\eta(\vec{w})} = \norm{\vec{w}}.
\end{equation*}
It is clear that the map $\vec{w}\mapsto q\vec{w}q^{-1}$ is linear and bijective for $\vec{w}\mapsto q^{-1}\vec{w}q$ defines its inverse with respect to composition of maps.
\end{proof}

Using the Clifford algebra formalism, one gains a compact formula for M\"{o}bius addition, as shown in the following theorem.

\begin{theorem}[Theorem 5.2, \cite{JL2010CAM}]\label{thm: compact formula for Mobius addition}
In $\Cl{n}$, M\"{o}bius addition can be expressed as
\begin{equation}\label{Eqn: Simplified Mobius addition}
\vec{u}\oplus_M\vec{v} = (\vec{u}+\vec{v})(1-\vec{u}\vec{v})^{-1}
\end{equation}
for all $\vec{u},\vec{v}\in\B$. The gyroautomorphisms are given by $\gyr{\vec{u},\vec{v}}{\vec{w}} = q\vec{w}q^{-1}$, where 
$$q = \dfrac{1-\vec{u}\vec{v}}{\abs{1-\vec{u}\vec{v}}},$$
for all $\vec{u},\vec{v}, \vec{w}\in\B$.
\end{theorem}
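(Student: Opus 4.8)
The plan is to prove the two assertions of Theorem~\ref{thm: compact formula for Mobius addition} in turn, in each case reducing everything to the anticommutation relations of Theorem~\ref{thm: Proposition 5 of paper EGB} and the rationalization formula for $(1-\vec u\vec v)^{-1}$ recorded there. For the addition formula~\eqref{Eqn: Simplified Mobius addition} I would start from the right-hand side, clear the inverse using $(1-\vec u\vec v)^{-1} = (1-\vec v\vec u)/\eta(1-\vec u\vec v)$ (valid since $\vec u,\vec v\in\B$ forces $\norm{\vec u}\norm{\vec v}\ne 1$), and check that the numerator and denominator so produced are exactly the ones in the classical definition~\eqref{eqn: Euclidean Mobius addition}. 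For the gyroautomorphism formula I would compute the two nested M\"obius sums $\vec u\oplus_M(\vec v\oplus_M\vec w)$ and $(\vec u\oplus_M\vec v)\oplus_M(q\vec w q^{-1})$ in closed form inside $\Cl{n}$, observe that they coincide, and then read off $\gyr{\vec u,\vec v}{\vec w}$ from the gyrator identity.

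For the first step, since the Clifford conjugation is an anti-automorphism extending $-I$ on $\R^n$, we get $\overline{1-\vec u\vec v} = 1 - \bar{\vec v}\,\bar{\vec u} = 1-\vec v\vec u$, so $\eta(1-\vec u\vec v) = (1-\vec u\vec v)(1-\vec v\vec u)$. Expanding and using $\vec u\vec v+\vec v\vec u = -2\gen{\vec u,\vec v}$ and $\vec v^2 = -\norm{\vec v}^2$ (Theorem~\ref{thm: Proposition 5 of paper EGB}) gives $\eta(1-\vec u\vec v) = 1+2\gen{\vec u,\vec v}+\norm{\vec u}^2\norm{\vec v}^2$, the denominator in~\eqref{eqn: Euclidean Mobius addition}. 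For the numerator, $(\vec u+\vec v)(1-\vec v\vec u) = \vec u+\vec v-\vec u\vec v\vec u-\vec v^2\vec u$; substituting $\vec v^2\vec u = -\norm{\vec v}^2\vec u$ and the identity $\vec u\vec v\vec u = \norm{\vec u}^2\vec v - 2\gen{\vec u,\vec v}\vec u$ (which follows from $\vec u\vec v = -2\gen{\vec u,\vec v}-\vec v\vec u$ and $\vec u^2 = -\norm{\vec u}^2$) collapses this to $(1+2\gen{\vec u,\vec v}+\norm{\vec v}^2)\vec u+(1-\norm{\vec u}^2)\vec v$, which indeed lies in $\R^n$. Dividing by $\eta(1-\vec u\vec v)$ reproduces~\eqref{eqn: Euclidean Mobius addition} verbatim.

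For the gyroautomorphism, put $A = 1-\vec u\vec v$; then $A\in\Gamma_n$ by Theorem~\ref{thm: Proposition 5 of paper EGB}, the scalar $\abs A$ is central, so $q\vec wq^{-1} = A\vec wA^{-1}$, which by Theorem~\ref{thm: invariant of modulus} lies in $\R^n$ and has the same norm as $\vec w$; in particular it lies in $\B$ whenever $\vec w$ does. Applying~\eqref{Eqn: Simplified Mobius addition} twice and letting the intermediate inverse factors cancel, a direct computation gives
\begin{equation*}
\vec u\oplus_M(\vec v\oplus_M\vec w) = (\vec u+\vec v+\vec w-\vec u\vec v\vec w)(1-\vec u\vec v-\vec u\vec w-\vec v\vec w)^{-1}.
\end{equation*}
On the other hand, writing $\vec u\oplus_M\vec v = (\vec u+\vec v)A^{-1}$ and $q\vec wq^{-1} = A\vec wA^{-1}$, the same manipulation yields
\begin{equation*}
(\vec u\oplus_M\vec v)\oplus_M(q\vec wq^{-1}) = (\vec u+\vec v+\vec w-\vec u\vec v\vec w)(1-\vec u\vec v-\vec u\vec w-\vec v\vec w)^{-1},
\end{equation*}
so $\vec u\oplus_M(\vec v\oplus_M\vec w) = (\vec u\oplus_M\vec v)\oplus_M(q\vec wq^{-1})$. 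Substituting this into the gyrator identity $\gyr{\vec u,\vec v}{\vec w} = \ominus(\vec u\oplus_M\vec v)\oplus_M(\vec u\oplus_M(\vec v\oplus_M\vec w))$ and applying the left cancellation law (Table~\ref{tab: properties of gyrogroups}) gives $\gyr{\vec u,\vec v}{\vec w} = q\vec wq^{-1}$.

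I expect the only real friction to be the noncommutative bookkeeping: in the addition step one must apply $\vec u\vec v\vec u = \norm{\vec u}^2\vec v-2\gen{\vec u,\vec v}\vec u$ in the right order, and in the gyroautomorphism step one must be sure that each factor $1-\vec x\vec y$ appearing as a denominator is actually invertible in $\Cl{n}$ --- which it is, since each such factor is the denominator of a M\"obius sum that by closure of $(\B,\oplus_M)$ returns to $\B$ (equivalently, each lies in $\Gamma_n$). Beyond that the computation is mechanical once all products are written out in the correct order.
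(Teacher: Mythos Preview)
Your argument is correct and self-contained: the verification of \eqref{Eqn: Simplified Mobius addition} via $(1-\vec u\vec v)^{-1}=(1-\vec v\vec u)/\eta(1-\vec u\vec v)$ and the identity $\vec u\vec v\vec u=\norm{\vec u}^2\vec v-2\gen{\vec u,\vec v}\vec u$ is clean, and the gyroautomorphism computation---showing that both $\vec u\oplus_M(\vec v\oplus_M\vec w)$ and $(\vec u\oplus_M\vec v)\oplus_M(A\vec wA^{-1})$ collapse to $(\vec u+\vec v+\vec w-\vec u\vec v\vec w)(1-\vec u\vec v-\vec u\vec w-\vec v\vec w)^{-1}$, then invoking the gyrator identity and left cancellation---goes through as written. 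The invertibility concern you flag is handled exactly as you indicate: each composite denominator factors as a product of terms of the form $1-\vec x\vec y$ with $\vec x,\vec y\in\B$, each of which lies in $\Gamma_n$ by Theorem~\ref{thm: Proposition 5 of paper EGB}.

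There is, however, nothing in the paper to compare against: Theorem~\ref{thm: compact formula for Mobius addition} is quoted without proof from \cite{JL2010CAM} (Theorem~5.2 there) and is used here purely as background input. So your write-up supplies a proof where the paper deliberately omits one; it is sound, but strictly speaking goes beyond what the paper itself establishes.
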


\section{Metrics on the M\"{o}bius gyrogroup and their isometry groups}\label{Sec: Main result}
In this section, we prove a useful inequality involving M\"{o}bius addition and the Euclidean norm as an application of the Cauchy--Schwarz inequality, using the Clifford algebra formalism. This enables us to define a variant of norm metric on the M\"{o}bius gyrogroup. This metric turns out to be a characteristic property of M\"{o}bius transformations on $\hat{\R}^n$ carrying $\B$ onto itself, where $\hat{\R}^n$ is the one-point compactification of $\R^n$. We then give a complete description of the corresponding isometry group via a gyrogroup approach. 

\begin{theorem}\label{thm: bound for Euclidean norm of Mobius addition}
The inequality 
\begin{equation}\label{eqn: inequality in Monius ball}
\dfrac{\norm{\vec{u}}-\norm{\vec{v}}}{1+\norm{\vec{u}}\norm{\vec{v}}}\leq \norm{\vec{u}\oplus_M \vec{v}} \leq \dfrac{\norm{\vec{u}}+\norm{\vec{v}}}{1-\norm{\vec{u}}\norm{\vec{v}}}
\end{equation}
holds in the M\"{o}bius gyrogroup.
\end{theorem}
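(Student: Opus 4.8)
The plan is to move into the Clifford algebra $\Cl{n}$, obtain a closed form for $\norm{\vec{u}\oplus_M\vec{v}}^2$, and then reduce \eqref{eqn: inequality in Monius ball} to a one-variable estimate in which the Cauchy--Schwarz inequality does all the work. First I would combine the compact formula $\vec{u}\oplus_M\vec{v} = (\vec{u}+\vec{v})(1-\vec{u}\vec{v})^{-1}$ of Theorem \ref{thm: compact formula for Mobius addition} with Theorem \ref{thm: Proposition 5 of paper EGB}\eqref{item: squared modulus}. Since $\vec{u}\oplus_M\vec{v}\in\B\subseteq\R^n$ and $\abs{\vec{w}} = \norm{\vec{w}}$ and $\eta(\vec{w}) = \norm{\vec{w}}^2$ for every $\vec{w}\in\R^n$, this gives
$$
\norm{\vec{u}\oplus_M\vec{v}}^2 = \eta\bigl((\vec{u}+\vec{v})(1-\vec{u}\vec{v})^{-1}\bigr) = \frac{\eta(\vec{u}+\vec{v})}{\eta(1-\vec{u}\vec{v})} = \frac{\norm{\vec{u}+\vec{v}}^2}{\eta(1-\vec{u}\vec{v})}.
$$
A short computation with the Clifford conjugation (an anti-automorphism extending $-I$ on $\R^n$) gives $\overline{1-\vec{u}\vec{v}} = 1-\vec{v}\vec{u}$, and then the relations $\vec{u}\vec{v}+\vec{v}\vec{u} = -2\gen{\vec{u},\vec{v}}$ and $\vec{v}^2 = -\norm{\vec{v}}^2$ from Theorem \ref{thm: Proposition 5 of paper EGB} yield $\eta(1-\vec{u}\vec{v}) = (1-\vec{u}\vec{v})(1-\vec{v}\vec{u}) = 1 + 2\gen{\vec{u},\vec{v}} + \norm{\vec{u}}^2\norm{\vec{v}}^2$. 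Hence
$$
\norm{\vec{u}\oplus_M\vec{v}}^2 = \frac{\norm{\vec{u}}^2 + 2\gen{\vec{u},\vec{v}} + \norm{\vec{v}}^2}{1 + 2\gen{\vec{u},\vec{v}} + \norm{\vec{u}}^2\norm{\vec{v}}^2}.
$$

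Next I would set $a = \norm{\vec{u}}$, $b = \norm{\vec{v}}$ and $t = \gen{\vec{u},\vec{v}}$, so that $\abs{t}\le ab$ by Cauchy--Schwarz, and regard the last fraction as $g(2t)$, where
$$
g(s) = \frac{(a^2+b^2)+s}{(1+a^2b^2)+s} = 1 - \frac{(1-a^2)(1-b^2)}{(1+a^2b^2)+s}.
$$
Since $a,b<1$ we have $(1-a^2)(1-b^2)>0$, while on the range $s\in[-2ab,2ab]$ the denominator satisfies $(1+a^2b^2)+s\ge(1-ab)^2>0$; therefore $g$ is increasing there, and evaluating at the endpoints $s=\pm 2ab$ gives
$$
\frac{(a-b)^2}{(1-ab)^2} \le \norm{\vec{u}\oplus_M\vec{v}}^2 \le \frac{(a+b)^2}{(1+ab)^2}.
$$
Taking square roots (legitimate because $ab<1$ keeps every denominator positive) and then weakening via $1-ab\le 1+ab$ yields $\norm{\vec{u}\oplus_M\vec{v}}\le\frac{a+b}{1+ab}\le\frac{a+b}{1-ab}$ and $\norm{\vec{u}\oplus_M\vec{v}}\ge\frac{\abs{a-b}}{1-ab}\ge\frac{a-b}{1+ab}$, which is exactly \eqref{eqn: inequality in Monius ball}; in fact the argument produces the sharper bounds $\frac{\abs{a-b}}{1-ab}$ and $\frac{a+b}{1+ab}$.

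I do not foresee a serious obstacle. The conceptual heart is noticing that, once Cauchy--Schwarz confines $t$ to $[-ab,ab]$, the quantity $\norm{\vec{u}\oplus_M\vec{v}}^2$ is a monotone function of $t$, so the estimate is decided at the two endpoints; the only points requiring care are the positivity of $\eta(1-\vec{u}\vec{v})$ and of the denominators occurring along the way, all of which follow from $\norm{\vec{u}}\,\norm{\vec{v}}<1$.
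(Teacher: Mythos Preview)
Your proof is correct and follows essentially the same route as the paper: both pass to the Clifford algebra to write $\norm{\vec{u}\oplus_M\vec{v}}^2 = \eta(\vec{u}+\vec{v})/\eta(1-\vec{u}\vec{v})$, compute $\eta(\vec{u}+\vec{v}) = \norm{\vec{u}}^2+2\gen{\vec{u},\vec{v}}+\norm{\vec{v}}^2$ and $\eta(1-\vec{u}\vec{v}) = 1+2\gen{\vec{u},\vec{v}}+\norm{\vec{u}}^2\norm{\vec{v}}^2$, and then invoke Cauchy--Schwarz. The one difference is in the last step: the paper bounds numerator and denominator \emph{separately}, pairing the maximum of the former with the minimum of the latter (and vice versa), which already gives \eqref{eqn: inequality in Monius ball}; your monotonicity argument in $t=\gen{\vec{u},\vec{v}}$ treats the quotient as a single increasing function of $t$ and thereby produces the genuinely sharper bounds $\dfrac{|a-b|}{1-ab}\le\norm{\vec{u}\oplus_M\vec{v}}\le\dfrac{a+b}{1+ab}$ that you note at the end.
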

\begin{proof}
Using the Cauchy--Schwarz inequality, we have $$-\norm{\vec{u}}\norm{\vec{v}}\leq \gen{\vec{u},\vec{v}}\leq \norm{\vec{u}}\norm{\vec{v}}$$ for all $\vec{u},\vec{v}\in\R^n$.
This implies that
$$
\eta(\vec{u}+\vec{v}) = \norm{\vec{u}}^2 -(\vec{u}\vec{v}+\vec{v}\vec{u})+\norm{\vec{v}}^2 = \norm{\vec{u}}^2 +2\gen{\vec{u},\vec{v}}+\norm{\vec{v}}^2 \leq  (\norm{\vec{u}}+\norm{\vec{v}})^2
$$
and that $\eta(\vec{u}+\vec{v}) \geq (\norm{\vec{u}}-\norm{\vec{v}})^2$ for all $\vec{u},\vec{v}\in\R^n$. Let $\vec{u},\vec{v}\in\B$. As in the proof of Proposition 5 (4) of \cite{TSKW2015EGB}, we have $\eta(1-\vec{u}\vec{v})\geq (1-\norm{\vec{u}}\norm{\vec{v}})^2$ and 
$$\eta(1-\vec{u}\vec{v})  = 1 + 2\gen{\vec{u},\vec{v}}+ \norm{\vec{u}}^2\norm{\vec{v}}^2 \leq  (1+\norm{\vec{u}}\norm{\vec{v}})^2.$$ Hence, by Theorem \ref{thm: Proposition 5 of paper EGB} \eqref{item: squared modulus}, 
$$
\norm{\vec{u}\oplus_M\vec{v}} = \sqrt{\dfrac{\eta(\vec{u}+\vec{v})}{\eta(1-\vec{u}\vec{v})}}\leq \sqrt{\dfrac{(\norm{\vec{u}}+\norm{\vec{v}})^2}{(1-\norm{\vec{u}}\norm{\vec{v}})^2}} = \dfrac{\norm{\vec{u}}+\norm{\vec{v}}}{1-\norm{\vec{u}}\norm{\vec{v}}}
$$
and similarly
$$
\norm{\vec{u}\oplus_M\vec{v}} = \sqrt{\dfrac{\eta(\vec{u}+\vec{v})}{\eta(1-\vec{u}\vec{v})}}\geq \sqrt{\dfrac{(\norm{\vec{u}}-\norm{\vec{v}})^2}{(1+\norm{\vec{u}}\norm{\vec{v}})^2}} \geq \dfrac{\norm{\vec{u}}-\norm{\vec{v}}}{1+\norm{\vec{u}\norm{\vec{v}}}},
$$
as required.
\end{proof}

In view of \eqref{eqn: inequality in Monius ball} and the well known trigonometric identity, the tangent function is needed in order to obtain a bounded metric on the unit ball of $\R^n$. In fact, define a function $\norm{\cdot}_T$ by
\begin{equation}
\norm{\vec{v}}_T = \tan^{-1}{\norm{\vec{v}}}
\end{equation}
for all $\vec{v}\in\B$. Here, $T$ stands for \qt{$\tan^{-1}$}.

\begin{theorem}\label{thm: gyronorm on B induced by tan}
$\norm{\cdot}_T$ satisfies the following properties:
\begin{enumerate}
\item\label{item: positive, tan} $\norm{\vec{x}}_T\geq 0$ and $\norm{\vec{x}}_T = 0$ if and only if $\vec{x} = \vec{0}$;
\item\label{item: invariant inverse, tan} $\norm{\ominus \vec{x}}_T = \norm{\vec{x}}_T$;
\item\label{item: subadditive, tan} $\norm{\vec{x}}_T - \norm{\vec{y}}_T \leq \norm{\vec{x}\oplus_M\vec{y}}_T \leq \norm{\vec{x}}_T + \norm{\vec{y}}_T$;
\item\label{item: invariant gyration, tan} $\norm{\gyr{\vec{u},\vec{v}}{\vec{x}}}_T = \norm{\vec{x}}_T$
\end{enumerate}
for all $\vec{u}, \vec{v}, \vec{x}, \vec{y}\in\B$.
\end{theorem}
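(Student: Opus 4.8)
The plan is to derive all four items from elementary monotonicity of $\tan^{-1}$ together with Theorem~\ref{thm: bound for Euclidean norm of Mobius addition}. The organizing observation is that $\vec{v}\in\B$ forces $0\le\norm{\vec{v}}<1$, so $\norm{\vec{v}}_T=\tan^{-1}\norm{\vec{v}}$ lies in $[0,\pi/4)$; hence any sum or difference of two such numbers stays in $(-\pi/2,\pi/2)$, the interval on which $\tan^{-1}$ and $\tan$ are mutual inverses. I will keep this range restriction in mind throughout.

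Items~\eqref{item: positive, tan}, \eqref{item: invariant inverse, tan} and~\eqref{item: invariant gyration, tan} I would dispatch immediately. Item~\eqref{item: positive, tan} holds because $\tan^{-1}$ is nonnegative and strictly increasing on $[0,\infty)$ with $\tan^{-1}0=0$, and $\norm{\vec{x}}=0$ exactly when $\vec{x}=\vec{0}$. Item~\eqref{item: invariant inverse, tan} holds because $\ominus\vec{x}=-\vec{x}$ and the Euclidean norm is even. For item~\eqref{item: invariant gyration, tan}, Theorem~\ref{thm: compact formula for Mobius addition} gives $\gyr{\vec{u},\vec{v}}{\vec{x}}=q\vec{x}q^{-1}$ with $q=(1-\vec{u}\vec{v})/\abs{1-\vec{u}\vec{v}}$, and $q\in\Gamma_n$ because $1-\vec{u}\vec{v}\in\Gamma_n$ (part~(3) of Theorem~\ref{thm: Proposition 5 of paper EGB}, using $\norm{\vec{u}}\norm{\vec{v}}<1$) while $\abs{1-\vec{u}\vec{v}}>0$; then Theorem~\ref{thm: invariant of modulus} yields $\norm{\gyr{\vec{u},\vec{v}}{\vec{x}}}=\norm{q\vec{x}q^{-1}}=\norm{\vec{x}}$, whence $\norm{\gyr{\vec{u},\vec{v}}{\vec{x}}}_T=\norm{\vec{x}}_T$.

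The substantive item is~\eqref{item: subadditive, tan}. Put $\alpha=\norm{\vec{x}}_T$ and $\beta=\norm{\vec{y}}_T$, so that $\tan\alpha=\norm{\vec{x}}$, $\tan\beta=\norm{\vec{y}}$, and $\alpha,\beta\in[0,\pi/4)$. Since $\tan\alpha\tan\beta=\norm{\vec{x}}\norm{\vec{y}}<1$ and $\alpha\pm\beta\in(-\pi/2,\pi/2)$, the tangent addition and subtraction identities give
\[
\tan(\alpha-\beta)=\frac{\norm{\vec{x}}-\norm{\vec{y}}}{1+\norm{\vec{x}}\norm{\vec{y}}},\qquad \tan(\alpha+\beta)=\frac{\norm{\vec{x}}+\norm{\vec{y}}}{1-\norm{\vec{x}}\norm{\vec{y}}}.
\]
Theorem~\ref{thm: bound for Euclidean norm of Mobius addition} says precisely that $\tan(\alpha-\beta)\le\norm{\vec{x}\oplus_M\vec{y}}\le\tan(\alpha+\beta)$. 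Applying the strictly increasing $\tan^{-1}$, and using that it undoes $\tan$ on $(-\pi/2,\pi/2)$, I obtain $\alpha-\beta\le\norm{\vec{x}\oplus_M\vec{y}}_T\le\alpha+\beta$, which is~\eqref{item: subadditive, tan}.

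The only place that needs care --- the closest thing to an obstacle here --- is checking that the trigonometric bookkeeping in~\eqref{item: subadditive, tan} is legitimate: one must know that $\alpha+\beta<\pi/2$ (so that the addition formula is valid and $\tan^{-1}\circ\tan$ is the identity at $\alpha+\beta$) and that $1-\norm{\vec{x}}\norm{\vec{y}}>0$. Both follow at once from $\norm{\vec{x}},\norm{\vec{y}}<1$, i.e. from $\vec{x},\vec{y}\in\B$. With that in hand, the remaining steps are routine appeals to monotonicity of $\tan^{-1}$ and to results already established.
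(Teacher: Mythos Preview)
Your proof is correct and follows essentially the same route as the paper: items \eqref{item: positive, tan}, \eqref{item: invariant inverse, tan}, \eqref{item: invariant gyration, tan} are handled by monotonicity of $\tan^{-1}$, evenness of the norm, and Theorems~\ref{thm: compact formula for Mobius addition} and~\ref{thm: invariant of modulus} respectively, while item \eqref{item: subadditive, tan} is obtained by combining Theorem~\ref{thm: bound for Euclidean norm of Mobius addition} with the tangent addition formulas and the monotonicity of $\tan^{-1}$. Your explicit check that $\alpha+\beta<\pi/2$ and $1-\norm{\vec{x}}\norm{\vec{y}}>0$ is a welcome bit of rigor that the paper leaves implicit.
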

\begin{proof}
Item \eqref{item: positive, tan} follows from the fact that $\tan^{-1}$ is a strictly increasing injective function on $(-\infty, \infty)$. Item \eqref{item: invariant inverse, tan} follows from the fact that $\norm{-\vec{x}} = \norm{\vec{x}}$.

To prove \eqref{item: subadditive, tan}, set $x = \tan^{-1}{\norm{\vec{x}}}$ and $y = \tan^{-1}{\norm{\vec{y}}}$. By Theorem \ref{thm: bound for Euclidean norm of Mobius addition},
$$
\dfrac{\norm{\vec{x}}-\norm{\vec{y}}}{1+\norm{\vec{x}}\norm{\vec{y}}}\leq \norm{\vec{x}\oplus_M \vec{y}} \leq \dfrac{\norm{\vec{x}}+\norm{\vec{y}}}{1-\norm{\vec{x}}\norm{\vec{y}}}
$$
and so $\tan{(x-y)} \leq \norm{\vec{x}\oplus_M \vec{y}} \leq \tan{(x+y)}$. Since $\tan^{-1}$ is an increasing function, it follows that 
$
x - y  \leq \tan^{-1}{\norm{\vec{x}\oplus_M \vec{y}}}\leq x + y
$, as claimed. By Theorem \ref{thm: compact formula for Mobius addition}, there is an element $q\in\Gamma_n$ for which $\gyr{\vec{u},\vec{v}}{\vec{x}} = q\vec{x}q^{-1}$. It follows from Theorem \ref{thm: invariant of modulus} that
$$
\norm{\gyr{\vec{u},\vec{v}}{\vec{x}}}_T = \tan^{-1}{\norm{q\vec{x}q^{-1}}} = \tan^{-1}{\norm{\vec{x}}} = \norm{\vec{x}}_T,
$$
which proves \eqref{item: invariant gyration, tan}.
\end{proof}

As a consequence of Theorem \ref{thm: gyronorm on B induced by tan}, we obtain a new metric on the M\"{o}bius gyrogroup. Unlike the Poincar\'{e} metric, this metric is bounded as shown in the following theorem.

\begin{theorem}
Define $d_T$ by
\begin{equation}
d_T(\vec{x},\vec{y}) = \norm{\ominus \vec{x}\oplus_M\vec{y}}_T
\end{equation}
for all $\vec{x}, \vec{y}\in\B$. Then $d_T$ is a bounded metric on $\B$.
\end{theorem}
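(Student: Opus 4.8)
The plan is to verify boundedness directly and then to check the three metric axioms by combining the gyronorm properties of Theorem \ref{thm: gyronorm on B induced by tan} with the gyrogroup identities collected in Table \ref{tab: properties of gyrogroups}. Boundedness is immediate: for every $\vec{x},\vec{y}\in\B$ the vector $\ominus\vec{x}\oplus_M\vec{y}$ again lies in $\B$, so $\norm{\ominus\vec{x}\oplus_M\vec{y}}<1$ and hence $d_T(\vec{x},\vec{y})=\tan^{-1}\norm{\ominus\vec{x}\oplus_M\vec{y}}<\tan^{-1}1=\pi/4$. Thus $d_T$ takes values in $[0,\pi/4)$.

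For nonnegativity and the identity of indiscernibles I would invoke Theorem \ref{thm: gyronorm on B induced by tan}\,\eqref{item: positive, tan}: $d_T(\vec{x},\vec{y})=\norm{\ominus\vec{x}\oplus_M\vec{y}}_T\geq 0$, with equality if and only if $\ominus\vec{x}\oplus_M\vec{y}=\vec{0}$. By the left cancellation law $\vec{x}\oplus_M(\ominus\vec{x}\oplus_M\vec{y})=\vec{y}$, the equation $\ominus\vec{x}\oplus_M\vec{y}=\vec{0}$ forces $\vec{y}=\vec{x}\oplus_M\vec{0}=\vec{x}$; conversely, $\vec{x}=\vec{y}$ gives $\ominus\vec{x}\oplus_M\vec{y}=\vec{0}$ by the inverse axiom. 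For symmetry I would use the identity $\ominus(\vec{u}\oplus_M\vec{v})=\gyr{\vec{u},\vec{v}}{(\ominus\vec{v}\oplus_M\ominus\vec{u})}$ from Table \ref{tab: properties of gyrogroups} with $\vec{u}=\ominus\vec{x}$ and $\vec{v}=\vec{y}$, which, using $\ominus\ominus\vec{x}=\vec{x}$, gives $\ominus(\ominus\vec{x}\oplus_M\vec{y})=\gyr{\ominus\vec{x},\vec{y}}{(\ominus\vec{y}\oplus_M\vec{x})}$. Then Theorem \ref{thm: gyronorm on B induced by tan}\,\eqref{item: invariant inverse, tan} and \eqref{item: invariant gyration, tan} yield
$$
d_T(\vec{x},\vec{y})=\norm{\ominus\vec{x}\oplus_M\vec{y}}_T=\norm{\ominus(\ominus\vec{x}\oplus_M\vec{y})}_T=\norm{\gyr{\ominus\vec{x},\vec{y}}{(\ominus\vec{y}\oplus_M\vec{x})}}_T=\norm{\ominus\vec{y}\oplus_M\vec{x}}_T=d_T(\vec{y},\vec{x}).
$$

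The triangle inequality is the substantive step, and the idea is to rewrite $\ominus\vec{x}\oplus_M\vec{z}$ so that the subadditivity of Theorem \ref{thm: gyronorm on B induced by tan}\,\eqref{item: subadditive, tan} becomes applicable. Starting from the identity $\vec{z}=\vec{y}\oplus_M(\ominus\vec{y}\oplus_M\vec{z})$ (a consequence of the left cancellation law) and then applying the gyroassociative law (Item \ref{item: gyroassociative law}) with $\vec{u}=\ominus\vec{x}$, $\vec{v}=\vec{y}$, $\vec{w}=\ominus\vec{y}\oplus_M\vec{z}$, I obtain
$$
\ominus\vec{x}\oplus_M\vec{z}=\ominus\vec{x}\oplus_M\bigl(\vec{y}\oplus_M(\ominus\vec{y}\oplus_M\vec{z})\bigr)=(\ominus\vec{x}\oplus_M\vec{y})\oplus_M\gyr{\ominus\vec{x},\vec{y}}{(\ominus\vec{y}\oplus_M\vec{z})}.
$$
Applying $\norm{\cdot}_T$ to both sides, then subadditivity, and finally the gyration-invariance \eqref{item: invariant gyration, tan} gives
$$
d_T(\vec{x},\vec{z})\leq \norm{\ominus\vec{x}\oplus_M\vec{y}}_T+\norm{\gyr{\ominus\vec{x},\vec{y}}{(\ominus\vec{y}\oplus_M\vec{z})}}_T=\norm{\ominus\vec{x}\oplus_M\vec{y}}_T+\norm{\ominus\vec{y}\oplus_M\vec{z}}_T=d_T(\vec{x},\vec{y})+d_T(\vec{y},\vec{z}).
$$
The only real obstacle is bookkeeping with gyrations: one must apply the gyroassociative rearrangement in the direction that places the gyroautomorphism on the second summand, where the invariance of $\norm{\cdot}_T$ disposes of it; once that is arranged, everything else is routine.
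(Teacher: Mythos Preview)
Your proof is correct and follows essentially the same route as the paper: positivity from Theorem \ref{thm: gyronorm on B induced by tan}\,\eqref{item: positive, tan}, symmetry via the identity $\ominus(\vec{u}\oplus_M\vec{v})=\gyr{\vec{u},\vec{v}}{(\ominus\vec{v}\oplus_M\ominus\vec{u})}$ combined with \eqref{item: invariant inverse, tan} and \eqref{item: invariant gyration, tan}, and the triangle inequality via the decomposition $\ominus\vec{x}\oplus_M\vec{z}=(\ominus\vec{x}\oplus_M\vec{y})\oplus_M\gyr{\ominus\vec{x},\vec{y}}{(\ominus\vec{y}\oplus_M\vec{z})}$ followed by subadditivity. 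The only difference is in boundedness: the paper first proves the triangle inequality and then bounds $d_T(\vec{x},\vec{y})\leq d_T(\vec{x},\vec{0})+d_T(\vec{0},\vec{y})<\pi/2$, whereas your direct observation that $\ominus\vec{x}\oplus_M\vec{y}\in\B$ already gives the sharper bound $d_T(\vec{x},\vec{y})<\pi/4$.
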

\begin{proof}
By Theorem \ref{thm: gyronorm on B induced by tan} \eqref{item: positive, tan}, $d_T(\vec{x},\vec{y})\geq 0$ for all $\vec{x},\vec{y}\in\B$ and $d_T(\vec{x},\vec{y}) = 0$ if and only if $\vec{x} = \vec{y}$. Let $\vec{x}, \vec{y}, \vec{z}\in \B$. Using appropriate properties of the M\"{o}bius gyrogroup in Table \ref{tab: properties of gyrogroups}, together with Theorem \ref{thm: gyronorm on B induced by tan}, we obtain 
$$
\norm{\ominus \vec{y}\oplus_M \vec{x}}_T =
 \norm{\ominus(\ominus \vec{y}\oplus_M \vec{x})}_T
= \norm{\gyr{\ominus \vec{y}, \vec{x}}{(\ominus \vec{x}\oplus_M \vec{y})}}_T
= \norm{\ominus \vec{x}\oplus_M \vec{y}}_T
$$
and so $d_T(\vec{y}, \vec{x}) = d_T(\vec{x}, \vec{y})$. Furthermore, we obtain
\begin{align*}
d_T(\vec{x}, \vec{z}) &= \norm{\ominus \vec{x}\oplus_M \vec{z}}_T\\
{} &= \norm{(\ominus \vec{x}\oplus_M \vec{y})\oplus_M \gyr{\ominus \vec{x}, \vec{y}}{(\ominus \vec{y}\oplus_M \vec{z})}}_T\\
{} &\leq \norm{\ominus \vec{x}\oplus_M \vec{y}}_T+\norm{\gyr{\ominus \vec{x}, \vec{y}}{(\ominus \vec{y}\oplus_M \vec{z})}}_T\\
{} &= \norm{\ominus \vec{x}\oplus_M \vec{y}}_T+ \norm{\ominus \vec{y}\oplus_M \vec{z}}_T\\
{} &= d_T(\vec{x}, \vec{y}) + d_T(\vec{y}, \vec{z}).
\end{align*}
This proves that $d_T$ satisfies the defining properties of a metric.

Note that $d_T(\vec{0}, \vec{v}) = \norm{\vec{v}}_T= \tan^{-1}{\norm{\vec{v}}} < \tan^{-1}{1} = \dfrac{\pi}{4}$ 
for all $\vec{v}\in\B$. Hence,
$$
d_T(\vec{x}, \vec{y}) \leq d_T(\vec{x},\vec{0})+d_T(\vec{0},\vec{y}) < \dfrac{\pi}{4}+\dfrac{\pi}{4} = \dfrac{\pi}{2}
$$
for all $\vec{x},\vec{y}\in\B$.
\end{proof}

Although $d_T$ is quite different from the Poincar\'{e} metric, both generate the same topology on the unit ball. It is clear that the Poincar\'{e} metric and the rapidity metric of the M\"{o}bius gyrogroup generate the same topology since the former is twice the latter.

\begin{theorem}
The topologies induced by $d_T$ and $d_M$ are equivalent.
\end{theorem}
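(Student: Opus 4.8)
\emph{Proof strategy.} The plan is to observe that $d_T$ and $d_M$ are built from one and the same quantity, the M\"{o}bius gyrometric $\varrho_M(\vec{x},\vec{y}) = \norm{\ominus\vec{x}\oplus_M\vec{y}}$, by applying two fixed strictly increasing homeomorphisms; since reparametrising a metric by such a function changes only the numerical values of distances and not the collection of open sets, the two topologies must coincide.

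Concretely, I would first record that $d_T(\vec{x},\vec{y}) = \tan^{-1}\bigl(\varrho_M(\vec{x},\vec{y})\bigr)$ and $d_M(\vec{x},\vec{y}) = \tanh^{-1}\bigl(\varrho_M(\vec{x},\vec{y})\bigr)$ for all $\vec{x},\vec{y}\in\B$, and that $\varrho_M(\vec{x},\vec{y})$ always lies in $[0,1)$ since $\ominus\vec{x}\oplus_M\vec{y}\in\B$. Hence $d_T = \phi\circ d_M$, where $\phi = \tan^{-1}\circ\tanh$ maps $[0,\infty)$ onto $[0,\tfrac{\pi}{4})$. The only analytic input, which is elementary, is that $\phi$ is a continuous strictly increasing bijection with $\phi(0)=0$ and with continuous inverse $\phi^{-1} = \tanh^{-1}\circ\tan\colon[0,\tfrac{\pi}{4})\to[0,\infty)$.

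Next I would compare open balls. Fix $\vec{x}\in\B$ and $0<r<\tfrac{\pi}{4}$. Since $\phi$ is strictly increasing and continuous, for every $\vec{y}\in\B$ one has $d_T(\vec{x},\vec{y})<r$ iff $\phi\bigl(d_M(\vec{x},\vec{y})\bigr)<r$ iff $d_M(\vec{x},\vec{y})<\phi^{-1}(r)$; that is, $B_{d_T}(\vec{x},r) = B_{d_M}\bigl(\vec{x},\phi^{-1}(r)\bigr)$. Symmetrically, for every $\rho>0$ one gets $B_{d_M}(\vec{x},\rho) = B_{d_T}\bigl(\vec{x},\phi(\rho)\bigr)$, with $\phi(\rho)<\tfrac{\pi}{4}$. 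Since in any metric space the balls of radius less than a fixed positive number form a base for the topology, the family $\{B_{d_T}(\vec{x},r)\colon\vec{x}\in\B,\ 0<r<\tfrac{\pi}{4}\}$ is simultaneously a base for the $d_T$-topology and for the $d_M$-topology, whence the two topologies agree.

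I expect essentially no obstacle: the computation is routine once the common dependence on $\varrho_M$ is isolated. The only point worth stating carefully is that it is cleanest to exhibit a common base of balls rather than to verify directly that the identity map $(\B,d_T)\to(\B,d_M)$ is a homeomorphism, although the latter also follows immediately from the ball identities above (and, incidentally, the same computation shows that $(\B,d_T)$ has diameter at most $\tfrac{\pi}{4}$, sharpening the crude bound $\tfrac{\pi}{2}$ obtained earlier from the triangle inequality).
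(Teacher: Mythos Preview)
Your argument is correct and rests on the same observation as the paper's proof: both $d_T$ and $d_M$ are obtained from the gyrometric $\varrho_M$ by composing with strictly increasing continuous functions. The paper, however, splits the work asymmetrically: for one inclusion it shows $d_T\le d_M$ (via the fact that $\tanh^{-1}x-\tan^{-1}x$ is increasing on $(0,1)$), and for the reverse inclusion it picks, for given $\epsilon>0$, the radius $\delta=\tan^{-1}(\tanh\epsilon)$ and checks $B_{d_T}(\vec{u},\delta)\subseteq B_{d_M}(\vec{u},\epsilon)$. Your packaging through the single homeomorphism $\phi=\tan^{-1}\circ\tanh$ is tidier: it yields the ball \emph{equalities} $B_{d_T}(\vec{x},r)=B_{d_M}(\vec{x},\phi^{-1}(r))$ in one stroke and treats both directions symmetrically. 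As you note, it also gives the sharper diameter bound $\sup d_T\le\pi/4$, improving the paper's $\pi/2$ obtained from the triangle inequality.
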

\begin{proof}
Note that $d_T(\vec{u}, \vec{v}) \leq d_M(\vec{u}, \vec{v})$
for all $\vec{u}, \vec{v}\in\B$ since $$f(x) = \tanh^{-1}{x} - \tan^{-1}{x}$$ defines a strictly increasing function on the open interval $(0, 1)$. This implies that the topology generated by $d_M$ is finer than the topology generated by $d_T$. Next, we prove that the topology generated by $d_T$ is finer than the topology generated by $d_M$. Let $\vec{u}\in\B$ and let $\epsilon>0$. Choose $\delta = \tan^{-1}{(\tanh{\epsilon})}$. Let $\vec{v}\in B_{d_T}(\vec{u}, \delta)$. Then $d_T(\vec{u}, \vec{v})<\delta$, that is,
$\norm{\ominus \vec{u}\oplus_M\vec{v}}_T < \tan^{-1}{(\tanh{\epsilon})}$. It follows that
$$
d_M(\vec{u},\vec{v}) = \tanh^{-1}{\norm{\ominus \vec{u}\oplus_M\vec{v}}} < \epsilon
$$
for $\tan$ and $\tanh^{-1}$ are strictly increasing functions. Hence, $\vec{v}\in B_{d_M}(\vec{u}, \epsilon)$. This proves $B_{d_T}(\vec{u}, \delta)\subseteq B_{d_M}(\vec{u}, \epsilon)$. 
\end{proof}

Let $\Or{\R^n}$ be the orthogonal group of $\R^n$, that is,
\begin{equation}
\Or{\R^n} = \cset{\tau}{\tau\textrm{ is a bijective orthogonal transformation on }\R^n}.
\end{equation}
Set
\begin{equation}
\Or{\B} = \cset{\res{\tau}{\B}}{\tau\in\Or{\R^n}},
\end{equation}
where $\res{\tau}{\B}$ is the restriction of $\tau$ to $\B$. It is clear that $\Or{\B}$ forms a group under composition of maps since $\B$ is preserved under orthogonal transformations on $\R^n$. Given $\vec{u},\vec{v}\in \B$, note that $\gyr{\vec{u}, \vec{v}}{}$ satisfies the following properties:
\begin{enumerate}
\item $\gyr{\vec{u}, \vec{v}}{\vec{0}} = \vec{0}$;
\item $\gyr{\vec{u}, \vec{v}}{}$ is an automorphism of $(\B,\oplus_M)$;
\item $\gyr{\vec{u}, \vec{v}}{}$ preserves the M\"{o}bius gyrometric.
\end{enumerate}
Hence, by Theorem 3.2 of \cite{TA2014GPM}, there is a bijective orthogonal transformation on $\R^n$, denoted by $\Gyr{\vec{u}, \vec{v}}{}$, for which $\res{\Gyr{\vec{u}, \vec{v}}{}}{\B} = \gyr{\vec{u}, \vec{v}}{}$. This proves the following inclusion:
$$
\cset{\gyr{\vec{u}, \vec{v}}{}}{\vec{u},\vec{v}\in\B}\subseteq \Or{\B}.
$$
Next, we compute the isometry group of $(\B, d_T)$.

\begin{lemma}\label{lem: tangent left gyrotranslation isometry}
The left  gyrotranslation $L_{\vec{u}}\colon \vec{v}\mapsto \vec{u}\oplus_M\vec{v}$ defines an isometry of $(\B, d_T)$ for all $\vec{u}\in\B$.
\end{lemma}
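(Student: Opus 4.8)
The plan is to verify the two defining requirements of an isometry: that $L_{\vec u}$ maps $\B$ bijectively onto itself, and that it preserves $d_T$. Bijectivity is immediate from the first entry of Table~\ref{tab: properties of gyrogroups}: since $\ominus\vec u\in\B$ by axiom (II), the map $L_{\ominus\vec u}$ is a well-defined self-map of $\B$, and $L_{\ominus\vec u} = L_{\vec u}^{-1}$, so $L_{\vec u}$ is a bijection of $\B$ with two-sided inverse $L_{\ominus\vec u}$.

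The substance is the identity $d_T(\vec u\oplus_M\vec x,\,\vec u\oplus_M\vec y) = d_T(\vec x,\vec y)$ for all $\vec x,\vec y\in\B$. Unwinding the definition of $d_T$, this amounts to
$$\norm{\ominus(\vec u\oplus_M\vec x)\oplus_M(\vec u\oplus_M\vec y)}_T = \norm{\ominus\vec x\oplus_M\vec y}_T.$$
The key step is the \emph{left gyrotranslation identity}
$$\ominus(\vec u\oplus_M\vec x)\oplus_M(\vec u\oplus_M\vec y) = \gyr{\vec u,\vec x}{(\ominus\vec x\oplus_M\vec y)},$$
which I would extract from Table~\ref{tab: properties of gyrogroups} as follows: replacing $\vec u$ by $\ominus\vec u$ in the fourth entry (the one labelled $(g^{-1}h)(h^{-1}k)=g^{-1}k$) and using $\ominus\ominus\vec u = \vec u$ gives $(\vec u\oplus_M\vec x)\oplus_M\gyr{\vec u,\vec x}{(\ominus\vec x\oplus_M\vec y)} = \vec u\oplus_M\vec y$; applying the left cancellation law (second entry of the table) to strip off the leading $\vec u\oplus_M\vec x$ then yields exactly the displayed identity. (Equivalently, this could be derived directly from the left gyroassociative law in Item~\ref{item: gyroassociative law} together with left cancellation, or simply cited as a standard gyrogroup identity.)

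With this identity in hand, the conclusion follows at once from Theorem~\ref{thm: gyronorm on B induced by tan}\eqref{item: invariant gyration, tan}, the gyration-invariance of $\norm{\cdot}_T$:
$$\norm{\ominus(\vec u\oplus_M\vec x)\oplus_M(\vec u\oplus_M\vec y)}_T = \norm{\gyr{\vec u,\vec x}{(\ominus\vec x\oplus_M\vec y)}}_T = \norm{\ominus\vec x\oplus_M\vec y}_T,$$
that is, $d_T(L_{\vec u}(\vec x), L_{\vec u}(\vec y)) = d_T(\vec x,\vec y)$. Together with bijectivity this shows $L_{\vec u}$ is an isometry of $(\B, d_T)$.

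I do not anticipate a genuine obstacle. The only point demanding care is the derivation of the left gyrotranslation identity: in the nonassociative setting $\ominus(\vec u\oplus_M\vec x)$ is \emph{not} $\ominus\vec u\oplus_M\vec x$, so the argument must proceed by the substitution $\vec u\mapsto\ominus\vec u$ in the Table~\ref{tab: properties of gyrogroups} identity followed by left cancellation, rather than by a naive cancellation of $\vec u$. Everything else is routine bookkeeping with the gyrogroup axioms.
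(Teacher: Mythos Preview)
Your proof is correct and follows essentially the same strategy as the paper's: reduce $d_T$-invariance of $L_{\vec u}$ to gyration-invariance of $\norm{\cdot}_T$ via the gyrogroup identities of Table~\ref{tab: properties of gyrogroups}. The only difference is in the bookkeeping: the paper never isolates the exact identity $\ominus(\vec u\oplus_M\vec x)\oplus_M(\vec u\oplus_M\vec y) = \gyr{\vec u,\vec x}{(\ominus\vec x\oplus_M\vec y)}$ but instead works under the Euclidean norm throughout, chaining the inverse-of-sum identity (third table row), norm-invariance of gyrations to shift a gyration across $\oplus_M$, the even property, and finally the fourth table row. Your derivation---substitute $\vec u\mapsto\ominus\vec u$ in the fourth row, then apply left cancellation---is a slightly shorter path to the same destination.
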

\begin{proof}
By Theorem 10 (1) of \cite{TSKW2015ITG}, $L_{\vec{u}}$ is a bijective self-map of $\B$. Using \mbox{appropriate} properties of the M\"{o}bius gyrogroup in Table \ref{tab: properties of gyrogroups}, we obtain 
\begin{align*}
\norm{\ominus (\vec{u}\oplus_M \vec{x})\oplus_M (\vec{u}\oplus_M \vec{y})} &= \norm{\gyr{\vec{u}, \vec{x}}{(\ominus \vec{x}\ominus \vec{u})}\oplus_M(\vec{u}\oplus_M \vec{y})}\\
{} &= \norm{(\ominus \vec{x}\ominus \vec{u})\oplus_M\gyr{\vec{x}, \vec{u}}{(\vec{u}\oplus_M \vec{y})}}\\
{} &= \norm{(\ominus \vec{x}\ominus \vec{u})\oplus_M\gyr{\ominus \vec{x}, \ominus \vec{u}}{(\vec{u}\oplus_M \vec{y})}}\\
{} &= \norm{\ominus \vec{x}\oplus_M \vec{y}}.
\end{align*}
It follows that 
\begin{equation*}
d_T(L_\vec{u}(\vec{x}), L_\vec{u}(\vec{y})) = \norm{\ominus L_{\vec{u}}(\vec{x})\oplus_M L_{\vec{u}}(\vec{y})}_T = \norm{\ominus \vec{x}\oplus_M \vec{y}}_T = d_T(\vec{x}, \vec{y}). \qedhere
\end{equation*}
\end{proof}

\begin{theorem}\label{thm: Isometry group of (B, dT)}
The isometry group of $(\B, d_T)$ is given by
\begin{equation}
\Iso{\B, d_T} = \cset{L_\vec{u}\circ \tau}{\vec{u}\in\B, \tau\in\Or{\B}}.
\end{equation}
\end{theorem}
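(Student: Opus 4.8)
The plan is to prove the two inclusions $\Iso{\B, d_T}\supseteq\cset{L_\vec{u}\circ\tau}{\vec{u}\in\B,\tau\in\Or{\B}}$ and $\Iso{\B, d_T}\subseteq\cset{L_\vec{u}\circ\tau}{\vec{u}\in\B,\tau\in\Or{\B}}$ separately. The first is bookkeeping: Lemma~\ref{lem: tangent left gyrotranslation isometry} already gives that each $L_\vec{u}$ is a $d_T$-isometry, so it suffices to check that each $\tau=\res{\sigma}{\B}$ with $\sigma\in\Or{\R^n}$ is one. Since the right-hand side of~\eqref{eqn: Euclidean Mobius addition} depends on its two arguments only through their Euclidean norms and their inner product, and $\sigma$ preserves both, one has $\sigma(\vec{x})\oplus_M\sigma(\vec{y})=\sigma(\vec{x}\oplus_M\vec{y})$ and $\ominus\sigma(\vec{x})=\sigma(\ominus\vec{x})$; hence $\norm{\ominus\tau(\vec{x})\oplus_M\tau(\vec{y})}=\norm{\sigma(\ominus\vec{x}\oplus_M\vec{y})}=\norm{\ominus\vec{x}\oplus_M\vec{y}}$, so $d_T(\tau(\vec{x}),\tau(\vec{y}))=d_T(\vec{x},\vec{y})$. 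As $\sigma$ maps $\B$ bijectively onto $\B$ and $L_\vec{u}$ is a bijection of $\B$, the composite $L_\vec{u}\circ\tau$ is a bijective $d_T$-isometry, which settles the first inclusion.

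For the reverse inclusion, let $\phi\in\Iso{\B, d_T}$. Since $d_T(\vec{x},\vec{y})=\tan^{-1}\varrho_M(\vec{x},\vec{y})$ and $\tan^{-1}$ is injective, $\phi$ preserves the M\"obius gyrometric $\varrho_M$. Put $\vec{u}=\phi(\vec{0})\in\B$ and $\psi=L_{\ominus\vec{u}}\circ\phi$; by the first part $L_{\ominus\vec{u}}$ is a $d_T$-isometry, so $\psi$ is a $d_T$-isometry (hence also preserves $\varrho_M$) with $\psi(\vec{0})=\ominus\vec{u}\oplus_M\vec{u}=\vec{0}$. Taking $\vec{x}=\vec{0}$ in $\varrho_M(\psi(\vec{x}),\psi(\vec{y}))=\varrho_M(\vec{x},\vec{y})$ gives $\norm{\psi(\vec{y})}=\norm{\vec{y}}$ for every $\vec{y}\in\B$. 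It then remains to show $\psi\in\Or{\B}$, for then $\phi=L_\vec{u}\circ\psi$ lies in the desired set, completing the proof.

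To prove $\psi\in\Or{\B}$ I would route the information through the Poincar\'{e} metric. Since $\psi$ preserves $\varrho_M$, it preserves $d_P=2\tanh^{-1}\varrho_M$ by~\eqref{eqn: Poincare metric and Mobius addition}. Feeding $\norm{\psi(\vec{x})}=\norm{\vec{x}}$ and $\norm{\psi(\vec{y})}=\norm{\vec{y}}$ into the closed-form expression for $d_P$ shows the factor $(1-\norm{\vec{x}}^2)(1-\norm{\vec{y}}^2)$ is unchanged, so $d_P$-preservation forces $\norm{\psi(\vec{x})-\psi(\vec{y})}=\norm{\vec{x}-\vec{y}}$ for all $\vec{x},\vec{y}\in\B$. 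Combining this with norm preservation and polarising yields $\gen{\psi(\vec{x}),\psi(\vec{y})}=\gen{\vec{x},\vec{y}}$ for all $\vec{x},\vec{y}\in\B$; then expanding $\norm{\psi(\vec{x}+\vec{y})-\psi(\vec{x})-\psi(\vec{y})}^2$ and $\norm{\psi(\lambda\vec{x})-\lambda\psi(\vec{x})}^2$ via inner-product preservation on vectors of $\B$ shows $\psi$ is additive and homogeneous on $\B$. Hence $\psi$ is the restriction to $\B$ of a linear map $T$ on $\R^n$ (set $T(\vec{v})=t^{-1}\psi(t\vec{v})$ for small $t>0$; independence of $t$ follows from the homogeneity just established), and $T$ preserves the Euclidean norm, so $T\in\Or{\R^n}$ and $\psi=\res{T}{\B}\in\Or{\B}$.

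The main obstacle is precisely the step in the previous paragraph: passing from \qt{$\psi$ preserves $\varrho_M$ and fixes $\vec{0}$} to \qt{$\psi$ is the restriction of an orthogonal transformation}, since a priori $\psi$ need not be linear, continuous, or a M\"obius transformation. The device that unlocks it is to convert gyrometric- and norm-preservation into genuine Euclidean-distance preservation on $\B$ using the explicit formula for $d_P$, after which orthogonality follows by elementary linear algebra. (Alternatively, one could note that $d_T$, $d_M$, and $d_P$ are all strictly increasing functions of $\varrho_M$, so the three metrics share the same isometry group, and then invoke the known description of $\Iso{\B, d_M}$; I would prefer the self-contained argument above.)
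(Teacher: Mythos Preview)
Your proof is correct and structurally parallel to the paper's: both establish the easy inclusion by checking that $L_\vec{u}$ and restricted orthogonal maps are $d_T$-isometries, and both handle the reverse inclusion by writing an arbitrary isometry $\phi$ as $L_{\phi(\vec{0})}\circ\psi$ with $\psi$ a gyrometric-preserving map fixing $\vec{0}$. The divergence is in the final step. The paper simply cites an external result (Theorem~3.2 of \cite{TA2014GPM}) to conclude that any $\varrho_M$-preserving bijection of $\B$ fixing $\vec{0}$ lies in $\Or{\B}$. You instead supply a self-contained argument: convert $\varrho_M$-preservation into $d_P$-preservation, use the explicit $\cosh^{-1}$ formula for $d_P$ together with norm preservation to deduce that $\psi$ preserves Euclidean distances on $\B$, polarise to get inner-product preservation, and then extend $\psi$ to a linear isometry of $\R^n$ by homogeneity. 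Your route is longer but avoids the black-box citation and is entirely elementary; the paper's route is terser but leans on outside machinery. Amusingly, the alternative you mention in your closing parenthetical---identifying the $d_T$-, $d_M$-, and $\varrho_M$-isometry groups and invoking the known description---is essentially what the paper does.
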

\begin{proof}
For convenience, if $\rho\in\Or{\R^n}$, then the restriction of $\rho$ to $\B$ is simply denoted by $\rho$. By Lemma \ref{lem: tangent left gyrotranslation isometry}, $L_\vec{u}$ is an isometry of $\B$ with respect to $d_T$. Let $\rho\in\Or{\R^n}$. Using \eqref{eqn: Euclidean Mobius addition}, we have $\rho(\vec{x})\oplus_M \rho(\vec{y}) = \rho(\vec{x}\oplus_M\vec{y})$ for all $\vec{x},\vec{y}\in\B$ since $\rho$ is linear and preserves the Euclidean inner product. Hence, the restriction of $\rho$ to $\B$ is indeed an automorphism of $(\B, \oplus_M)$ since $\rho(\B)\subseteq\B$ and $\rho^{-1}\in\Or{\R^n}$. It follows that
$$
d_T(\rho(\vec{x}), \rho(\vec{y})) = \norm{\rho(\ominus \vec{x}\oplus_M \vec{y})}_T = \norm{\ominus \vec{x}\oplus_M \vec{y}}_T = d_T(\vec{x}, \vec{y}).
$$
Thus, $\rho$ is an isometry of $\B$ and so $\cset{L_\vec{u}\circ \tau}{\vec{u}\in\B, \tau\in\Or{\B}}\subseteq \Iso{\B, d_T}$.

Let $T\in \Iso{\B, d_T}$. By definition, $T$ is a bijective self-map of $\B$. By Theorem 11 of \cite{TSKW2015ITG}, $T = L_{T(\vec{0})}\circ\rho$, where $\rho$ is a bijective self-map of $\B$ fixing $\vec{0}$. As in the proof of Theorem 18 (2) of \cite{TS2016TAG}, $ L_{T(\vec{0})}^{-1} = L_{\ominus T(\vec{0})}$ and so $\rho = L_{\ominus T(\vec{0})}\circ T$. Therefore, $\rho$ is an isometry of $(\B, d_T)$. Since $d_T(\rho(\vec{x}), \rho(\vec{y})) = d_T(\vec{x}, \vec{y})$ and $\tan^{-1}$ is injective, it follows that
$$
\norm{\ominus \rho(\vec{x})\oplus_M \rho(\vec{y})} = \norm{\ominus \vec{x}\oplus_M\vec{y}}
$$
for all $\vec{x},\vec{y}\in\B$. Thus, $\rho$ preserves the M\"{o}bius gyrometric. By Theorem 3.2 of \cite{TA2014GPM}, $\rho = \res{\tau}{\B}$, where $\tau$ is a bijective orthogonal transformation on $\R^n$. This proves that
\begin{equation*}
\Iso{\B, d_T} \subseteq \cset{L_\vec{u}\circ \tau}{\vec{u}\in\B, \tau\in\Or{\B}}.\qedhere
\end{equation*}
\end{proof}        

By Theorem \ref{thm: Isometry group of (B, dT)}, every isometry of $\B$ with respect to $d_T$ can be expressed as the composite of a left gyrotranslation with an orthogonal transformation restricted to $\B$. This expression is unique in the sense that if $L_\vec{u}\circ \alpha = L_{\vec{v}}\circ \beta$ with $\vec{u},\vec{v}$ in $\B$ and $\alpha,\beta$ in $\Or{\B}$, then $\vec{u} = \vec{v}$ and $\alpha = \beta$. Furthermore, we have the following composition law of isometries of $(\B, d_T)$:
\begin{equation}
(L_\vec{u}\circ \alpha)\circ(L_{\vec{v}}\circ \beta) = L_{\vec{u}\oplus_M \alpha(\vec{v})}\circ (\gyr{\vec{u}, \alpha(\vec{v})}{}\circ\alpha\circ\beta)
\end{equation}
for all $\vec{u},\vec{v}\in\B$ and $\alpha,\beta\in\Or{\B}$, a formula comparable to the composition law of Euclidean isometries.

Since $\vec{v}\mapsto L_{\vec{v}}$ defines a one-to-one correspondence from $\B$ to the set of left gyrotranslations of $\B$, we have
\begin{equation}
\Iso{\B, d_T}\cong \B\rtimes_{\rm gyr} \Or{\B}.
\end{equation}
Here, $\B\rtimes_{\rm gyr} \Or{\B}$ is the semidirect-product-like group whose underlying set is
\begin{equation}
\B\rtimes_{\rm gyr} \Or{\B} = \cset{(\vec{v}, \tau)}{\vec{v}\in\B, \tau\in\Or{\B}}
\end{equation}
with group law
\begin{equation}\label{eqn: group law of Bxgyr O(B)}
(\vec{u}, \alpha)(\vec{v}, \beta) = (\vec{u}\oplus_M \alpha(\vec{v}), \gyr{\vec{u}, \alpha(\vec{v})}{}\circ\alpha\circ\beta)
\end{equation}
for all $\vec{u},\vec{v}\in\B$ and $\alpha,\beta\in\Or{\B}$. This is a result analogous to the fact that the isometry group of the Euclidean space is the semidirect product of $\R^n$ and $\Or{\R^n}$:
$$
\R^n\rtimes \Or{\R^n} = \cset{(\vec{v}, \tau)}{\vec{v}\in\R^n, \tau\in\Or{\R^n}},
$$
where the group law is given by
$$
(\vec{u}, \alpha)(\vec{v}, \beta) = (\vec{u} + \alpha(\vec{v}), \alpha\circ\beta)
$$
for all $\vec{u},\vec{v}\in\R^n$ and $\alpha,\beta\in\Or{\R^n}$. The group $\B\rtimes_{\rm gyr} \Or{\B}$ is known as the \mbox{\it gyrosemidirect} of $\B$ and $\Or{\B}$ \cite[Section 2.6]{AU2008AHG}.

\begin{theorem}
Let $T$ be a self-map of $\B$. The following are equivalent:
\begin{enumerate}
\item $T$ preserves the Poincar\'{e} metric $d_P$;
\item $T$ preserves the rapidity metric $d_M$;
\item $T$ preserves the M\"{o}bius gyrometric $\varrho_M$;
\item $T$ preserves the metric $d_T$ generated by $\norm{\cdot}_T$.
\end{enumerate}
\end{theorem}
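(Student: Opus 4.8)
The plan is to observe that each of the four quantities is obtained from the single M\"{o}bius gyrometric $\varrho_M$ by post-composing with a strictly increasing scalar function, so that preservation of any one of them is equivalent to preservation of $\varrho_M$ itself.

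First I would record the identities
\[
d_P(\vec{x},\vec{y}) = 2\tanh^{-1}(\varrho_M(\vec{x},\vec{y})),\quad d_M(\vec{x},\vec{y}) = \tanh^{-1}(\varrho_M(\vec{x},\vec{y})),\quad d_T(\vec{x},\vec{y}) = \tan^{-1}(\varrho_M(\vec{x},\vec{y})),
\]
valid for all $\vec{x},\vec{y}\in\B$. The first is \eqref{eqn: Poincare metric and Mobius addition}; the second is the defining relation for $d_M$ together with \eqref{eqn: Mobius gyrometric}; and the third follows from the definitions of $d_T$ and $\norm{\cdot}_T$, since $d_T(\vec{x},\vec{y}) = \norm{\ominus\vec{x}\oplus_M\vec{y}}_T = \tan^{-1}\norm{\ominus\vec{x}\oplus_M\vec{y}} = \tan^{-1}(\varrho_M(\vec{x},\vec{y}))$. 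Because $\ominus\vec{x}\oplus_M\vec{y}\in\B$, the argument $\varrho_M(\vec{x},\vec{y})$ always lies in the interval $[0,1)$.

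Next I would invoke the elementary fact that each of the functions $g_P(t) = 2\tanh^{-1}t$, $g_M(t) = \tanh^{-1}t$, and $g_T(t) = \tan^{-1}t$ is strictly increasing, hence injective, on $[0,1)$. Writing $g$ for any one of these, for a self-map $T$ of $\B$ and a fixed pair $\vec{x},\vec{y}\in\B$ the equality $g(\varrho_M(T(\vec{x}),T(\vec{y}))) = g(\varrho_M(\vec{x},\vec{y}))$ holds if and only if $\varrho_M(T(\vec{x}),T(\vec{y})) = \varrho_M(\vec{x},\vec{y})$. Quantifying over all $\vec{x},\vec{y}\in\B$ and combining with the three displayed identities, this shows that each of statements (1), (2), and (4) is equivalent to statement (3); hence the four statements are mutually equivalent.

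There is essentially no obstacle in this argument: the only things to verify are the three displayed identities, which are immediate from the definitions and \eqref{eqn: Poincare metric and Mobius addition}, and the strict monotonicity of $2\tanh^{-1}$, $\tanh^{-1}$, and $\tan^{-1}$ on $[0,1)$, which is standard. The conceptual point is simply that $d_P$, $d_M$, $d_T$, and $\varrho_M$ all encode exactly the same data --- the value of $\varrho_M(\vec{x},\vec{y})$ --- up to an order isomorphism of the range, so a bijection preserving one of them preserves all of them.
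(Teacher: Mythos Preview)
Your proof is correct and follows essentially the same approach as the paper: both arguments reduce the equivalence to the injectivity of $\tanh^{-1}$ and $\tan^{-1}$ on $[0,1)$, together with the relation $d_P = 2d_M$. Your write-up is simply a more detailed version of the paper's one-line proof.
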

\begin{proof}
The theorem follows directly from the fact that $d_P(\vec{x}, \vec{y}) = 2d_M(\vec{x}, \vec{y})$ and that $\tanh^{-1}$ and $\tan^{-1}$ are injective.
\end{proof}

\begin{corollary}\label{cor: Isometry groups}
$\Iso{\B, d_P} = \Iso{\B, d_M} = \Iso{\B, \varrho_M} = \Iso{\B, d_T}$.
\end{corollary}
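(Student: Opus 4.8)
The plan is to derive Corollary~\ref{cor: Isometry groups} as an immediate consequence of the preceding theorem, with essentially no additional computation. For any metric $d$ on $\B$, the isometry group $\Iso{\B, d}$ is by definition the set of bijective self-maps of $\B$ preserving $d$, equipped with composition of maps as its group operation. The preceding theorem asserts exactly that a self-map $T$ of $\B$ preserves one of $d_P$, $d_M$, $\varrho_M$, $d_T$ if and only if it preserves all four. Hence, viewed as subsets of the set of all bijective self-maps of $\B$, the four sets $\Iso{\B, d_P}$, $\Iso{\B, d_M}$, $\Iso{\B, \varrho_M}$, $\Iso{\B, d_T}$ coincide; and since the group law is composition of maps in every case, they coincide as groups.

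The only point I would spell out is why the equivalences of the preceding theorem hold, i.e., why these four quantities are all related to the single quantity $\varrho_M$ by strictly increasing injections. By \eqref{eqn: Mobius gyrometric} and the definition of $d_M$ we have $d_M(\vec{x},\vec{y}) = \tanh^{-1}(\varrho_M(\vec{x},\vec{y}))$; by \eqref{eqn: Poincare metric and Mobius addition} we have $d_P = 2\,d_M$; and by \eqref{eqn: Mobius gyrometric} together with the definition of $\norm{\cdot}_T$ we have $d_T(\vec{x},\vec{y}) = \norm{\ominus\vec{x}\oplus_M\vec{y}}_T = \tan^{-1}(\varrho_M(\vec{x},\vec{y}))$. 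Since $t\mapsto 2t$, $\tanh^{-1}$ and $\tan^{-1}$ are strictly increasing, hence injective, on the relevant intervals, and since injectivity is stable under composition and inversion, preservation of any one of the four quantities by a self-map of $\B$ is equivalent to preservation of $\varrho_M$, and therefore to preservation of each of the other three. This is precisely the content of the preceding theorem, so it may simply be cited.

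There is no real obstacle here: the statement is a bookkeeping corollary. The one thing worth a moment's care is that the notion of ``isometry'' must be taken in a uniform sense across the four metrics --- in particular, the requirement of being a bijective self-map of $\B$ is the same condition no matter which of $d_P$, $d_M$, $\varrho_M$, $d_T$ is in play --- so that equality of the underlying sets of distance-preserving maps genuinely forces equality of the groups. Since the domain and codomain $\B$ and the bijectivity requirement are fixed throughout, this is automatic, and I would present the argument in one or two lines: invoke the preceding theorem for the coincidence of the four classes of distance-preserving self-maps, observe that composition of maps is the common group operation, and conclude.
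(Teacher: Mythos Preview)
Your proposal is correct and matches the paper's approach: the corollary is stated there without proof, as an immediate consequence of the preceding theorem, and your argument spells out exactly that deduction. Your extra remark that bijectivity and the composition law are the same data regardless of which metric is used is a fair point to make explicit, but no further content is needed.
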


Recall that a M\"{o}bius transformation of $\hat{\R}^n$ that leaves $\B$ invariant  is called a M\"{o}bius transformation of $\B$ \cite[p. 120]{JR2006FHM}. It is known that the isometry group of the Poincar\'{e} ball model $(\B, d_P)$, also called the {\it conformal ball model}, can be identified with the group of M\"{o}bius transformations of $\B$; see, for instance, \cite[Corollary 1 on p. 125]{JR2006FHM}. By Corollary \ref{cor: Isometry groups}, Equation \eqref{eqn: group law of Bxgyr O(B)} provides a parametric realization of the M\"{o}bius transformation group of $\B$ in terms of vectors and rotations. Further, $d_T$ is an invariant of M\"{o}bius transformations of $\B$ in the sense of the following theorem. 

\begin{theorem}
Every M\"{o}bius transformation of $\B$ restricts to an isometry of $(\B, d_T)$, and every isometry of $(\B, d_T)$ extends to a unique M\"{o}bius transformation of $\B$.
\end{theorem}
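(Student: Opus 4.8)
The plan is to deduce the statement from the classical identification of the isometry group of the conformal ball model with the group of M\"{o}bius transformations of $\B$, combined with Corollary~\ref{cor: Isometry groups}.

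First I would treat the forward direction. Let $\varphi$ be a M\"{o}bius transformation of $\B$, that is, a M\"{o}bius transformation of $\hat{\R}^n$ with $\varphi(\B) = \B$. Since any pole of $\varphi$ lies outside $\B$ (indeed outside the closure of $\B$), the restriction $\res{\varphi}{\B}$ is a genuine bijective self-map of $\B$, and by the cited classical result \cite[Corollary~1 on p.~125]{JR2006FHM} it is an isometry of the Poincar\'{e} ball model $(\B, d_P)$. I would then invoke Corollary~\ref{cor: Isometry groups}, which gives $\Iso{\B, d_P} = \Iso{\B, d_T}$, to conclude that $\res{\varphi}{\B} \in \Iso{\B, d_T}$. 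This shows every M\"{o}bius transformation of $\B$ restricts to an isometry of $(\B, d_T)$.

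For the converse I would start from $T \in \Iso{\B, d_T}$, use Corollary~\ref{cor: Isometry groups} again to get $T \in \Iso{\B, d_P}$, and then apply the same classical identification to produce a M\"{o}bius transformation $\varphi$ of $\B$ with $\res{\varphi}{\B} = T$. For uniqueness, suppose $\varphi_1, \varphi_2$ are M\"{o}bius transformations of $\hat{\R}^n$, each leaving $\B$ invariant, with $\res{\varphi_1}{\B} = T = \res{\varphi_2}{\B}$. Then $\psi = \varphi_2^{-1}\circ\varphi_1$ is a M\"{o}bius transformation of $\hat{\R}^n$ restricting to the identity on the nonempty open set $\B$; by the standard rigidity of M\"{o}bius transformations (a M\"{o}bius transformation of $\hat{\R}^n$ that is the identity on a nonempty open set is the identity), $\psi$ is the identity, so $\varphi_1 = \varphi_2$. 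Hence the M\"{o}bius extension of $T$ is unique.

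The substantive input is entirely the classical fact that $\Iso{\B, d_P}$ coincides with the group of M\"{o}bius transformations of $\B$; granting this together with Corollary~\ref{cor: Isometry groups}, the argument is bookkeeping, and the only step requiring a little care — more a matter of precision than of difficulty — is the uniqueness of the extension, which rests on the rigidity just mentioned. As an alternative self-contained route, one could instead start from Theorem~\ref{thm: Isometry group of (B, dT)} and verify directly that each left gyrotranslation $L_{\vec{u}}$ and each $\tau \in \Or{\B}$ is the restriction of a (unique) M\"{o}bius transformation of $\B$, and conversely that every M\"{o}bius transformation of $\B$ restricts to a map of the form $L_{\vec{u}}\circ\tau$; this would reprove the theorem without appealing to $d_P$.
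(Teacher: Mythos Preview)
Your proposal is correct and follows essentially the same route as the paper: both directions are reduced, via Corollary~\ref{cor: Isometry groups}, to the classical identification of $\Iso{\B, d_P}$ with the M\"{o}bius transformations of $\B$ from \cite{JR2006FHM}. The only differences are cosmetic: the paper cites Theorem~4.5.2 of \cite{JR2006FHM} (which already packages the uniqueness), whereas you cite the corollary on p.~125 and spell out uniqueness separately via rigidity; your added alternative route through Theorem~\ref{thm: Isometry group of (B, dT)} is not used in the paper's proof.
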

\begin{proof}
Let $\phi$ be a M\"{o}bius transformation of $\B$. By Theorem 4.5.2 of \cite{JR2006FHM}, $\phi$ restricts to an isometry of $(\B, d_P)$. By Corollary \ref{cor: Isometry groups}, $\res{\phi}{\B}$ is an isometry of $(\B, d_T)$. Let $\sigma$ be an isometry of $(\B, d_T)$. By the same corollary, $\sigma$ is an isometry of $(\B, d_P)$ and hence extends to a unique M\"{o}bius transformation of $\B$ by the same theorem.
\end{proof}


\bibliographystyle{amsplain}\addcontentsline{toc}{section}{References}
\bibliography{References}
\end{document}